\newtheorem{theorem}{Theorem}[section]
\newtheorem{definition}[theorem]{Definition}
\newtheorem{example}[theorem]{Example}
\newtheorem{problem}{Problem}
\newtheorem{proposition}[theorem]{Proposition}
\newtheorem{remark}[theorem]{Remark}
\newenvironment{proof}[1][Proof]{\noindent\textbf{#1.} }{\ \rule{0.5em}{0.5em}}
\newcommand{\R}{\mathbb{R}}
\newcommand{\E}{\mathbb{E}}
\newcommand{\p}{\mathbb{P}}
\newcommand{\N}{\mathcal{N}}
\newcommand{\Nd}{\mathcal{N}_{d}}
\newcommand{\h}{\mathfrak{H}}
\newcommand{\1}{\mathbf{1}}
\newcommand{\var}{\mathrm{\bf Var}}
\begin{document}

\begin{center}
{\Large{\bf Universal Gaussian fluctuations on the discrete Poisson chaos}}
\normalsize
\\~\\ by Giovanni Peccati \footnote{Facult\'{e} des Sciences, de la Technologie et de la Communication; UR en Math\'{e}matiques, Universit\'{e} du Luxembourg . 6, rue Richard
Coudenhove-Kalergi, L-1359 Luxembourg. Email: \texttt{giovanni.peccati@gmail.com}} and Cengbo Zheng \footnote{LAMA, Universit\'{e} Paris-Est Marne-la-Vall\'{e}e, 5 Boulevard Descartes,  77454 Champs sur Marne,  and LPMA, Universit\'{e} Paris VI, Paris, France. Email: \texttt{zhengcb@gmail.com }} \\ ~\\
\end{center}

{\small \noindent {\bf Abstract}: We prove that homogenous sums inside a fixed discrete Poisson chaos are universal with respect to normal approximations. This result parallels some recent findings, in a Gaussian context, by Nourdin, Peccati and Reinert (2010). As a by-product of our analysis, we provide some refinements of the CLTs for random variables on the Poisson space proved by Peccati, Sol\'e, Taqqu and Utzet (2010), and by Peccati and Zheng (2010).}

\noindent {\small \bf Key words}: {\small Central Limit Theorems; Contractions;  Discrete Poisson Chaos; Universality}.

\noindent {\small \bf 2010 Mathematics Subject Classification:} 60F05; 60G51; 60G57; 60H05.

\begin{section}{Introduction}
\subsection{Overview}

A {\it universality result} is a mathematical statement implying that the asymptotic behaviour of a large random system does not depend on the distribution of its components. Universality results are one of the leading themes of modern probability, distinguished examples being the Central Limit Theorem (CLT), the Donsker Theorem, or the Semicircular and Circular Laws in random matrix theory.

In this paper, we shall prove a new class of universality statements involving {\it homogeneous sums} based on a sequence of centered independent Poisson random variables. Homogeneous sums (see Definition \ref{d:hs}) are almost ubiquitous probabilistic objects: for instance, they provide archetypal examples of $U$-statistics, and they are the building blocks of such fundamental collections of random variables as the {\it Gaussian Wiener chaos}, the {\it Poisson Wiener chaos} or the {\it Walsh chaos}. See e.g. \cite{nourdin_peccati_book, npr4, nualart, pt_book, priv}, as well as the forthcoming Section \ref{ss:frame}, for an introduction to these concepts.

Our findings extend to the Poisson framework the results of \cite{npr2}, by Nourdin, Peccati and Reinert, where the authors discovered a remarkable universality property involving the normal approximation of homogeneous sums living inside a fixed Gaussian Wiener chaos. According to \cite{npr2}, the following universal phenomenon takes indeed place:

\begin{quote}
{\sl Let $\{F_n\}$ be a sequence of random variables such that each $F_n$ is a homogeneous sum of a fixed order $\geq 2$ based on a sequence of i.i.d. standard Gaussian random variables, and assume that $\{F_n\}$ verifies a CLT. Then, the CLT continues to hold if one replaces the i.i.d. Gaussian sequence, inside the definition of each $F_n$, with a generic collection of independent and identically distributed random variables with mean zero and unit variance. } (See Theorem \ref{t:unigauss} below for a precise statement).
\end{quote}

To describe this fact, one says that homogeneous sums inside the Gaussian Wiener chaos are {\it universal} with respect to normal approximations.

In this paper, we shall address the following natural question: {\sl are there other examples of homogeneous sums that enjoy the same universal property?} As anticipated, our proof of the universal character of homogeneous sums inside the Poisson Wiener chaos will yield a positive answer. As discussed in Remark \ref{r:cex}, and similarly to the Gaussian case, our conclusions do not extend to sums of order 1.

It is important to note that \cite{npr2} also contains an elementary counterexample, implying that homogeneous sums based on Rademacher sequences are not universal. This argument is reproduced in the proof of Proposition \ref{p:walshnot} below.

The findings of the present work are a continuation of the theory developed in \cite{pstu, peczheng}, respectively by Peccati, Sol\'e, Taqqu and Utzet and by Peccati and Zheng, where the authors combined two probabilistic techniques, namely the {\it Stein's method} for probabilistic approximations and the {\it Malliavin calculus of variations}, in order to compute explicit bounds in (possibly multidimensional) CLTs involving functionals of a given Poisson field. One of our main findings, see Theorem \ref{thm_HS_Poisson} below, provides a substantial refinement these results, which is indeed an analogous for Poisson homogeneous sums of the `fourth moment theorem' proved by Nualart and Peccati in \cite{nuapec}. One should note that the study of normal approximations on the Poisson space has recently gained much relevance, specifically in connection with stochastic geometry -- see \cite{DFR,FV,PLR1, lesmathias, SchTh}.

Other relevant references are the paper by Mossel {\it et al.} \cite{MOO}, containing an invariance principle on which \cite{npr} is based,
and de Jong \cite{deJongBook, deJongMulti}, where one can find remarkable CLTs for general degenerate $U$-statistics.

The subsequent Section \ref{ss:frame} contains a formal introduction to the objects studied in this paper. From now on, we assume that every random element is defined on a common probability space $(\Omega, \mathcal{F}, \p)$.

\subsection{Framework and motivation}\label{ss:frame}

The following three objects will play a crucial role in our discussion.
\begin{itemize}
  \item[--] ${\bf G} = \{ G_i : i\geq 1 \}$ indicates a collection of independent and identically distributed (i.i.d.) Gaussian random variables such that $G_i \sim \N(0,1)$;
  \item[--] ${\bf E} = \{ e_i : i\geq 1 \}$ denotes a {\it Rademacher sequence}, that is, the random variables $e_i$ are i.i.d. and such that  $\p(e_i = 1) = \p(e_i = -1) = \frac{1}{2} $ for every $i \geq 1$;
  \item[--] $\textbf{P} = \{ P_i : i\geq 1 \}$ stands for a collection of independent random variables such that
  \begin{equation}\label{e:pi}
  P_i \stackrel{\rm law}{  = } \frac{P(\lambda_i)-\lambda_i}{\sqrt{\lambda_i}},\quad i\geq 1,
  \end{equation}
  where $P(\lambda_i)$ indicates a Poisson random variable with parameter $\lambda_i\in (0,\infty)$.
\end{itemize}
We now formally introduce the notion of \emph{homogeneous sum}.
\begin{definition}[Homogeneous sums]\label{d:hs}
Fix some integers $1\leq q\leq N$, and write $[N]$ for the set $\{ 1,2,\cdots,N \}$. Let ${\bf X} = \{X_i : i\geq 1\}$ be a collection of independent random variables, and let $f: [N]^q \rightarrow \R$ be a {symmetric function vanishing on diagonals} (i.e. $f(i_1,\cdots,i_q)=0$ if $\exists k \neq l: i_k = i_l$). The random variable
$$ Q_q = Q_q(N,f,{\bf X}) = \sum_{1\leq i_1,\cdots,i_q \leq N} f(i_1,\cdots,i_q) X_{i_1}\cdots X_{i_q}$$
is called the {\bf multilinear homogeneous sum}, of order $q$, based on $f$ and on the first $N$ elements of ${\bf X}$. Plainly, a homogeneous sum of order $1$ is a finite sum of the type $\sum_{i=1}^Nf(i)X_i$.
\end{definition}
\begin{remark}
\rm
If, for $i=1,2,\ldots$,  $\E[X_i]=0$ and $\E[X_i^2]=1 $ (as e.g. for ${\bf X} ={\bf G},\, {\bf E}$ or ${\bf P}$), then we deduce immediately that the mean and variance of $ Q_q= Q_q(N,f,{\bf X})$ are given by:
$$ \E[Q_q]=0 , \qquad \E[Q_q^2]= q! \sum_{1\leq i_1,\cdots,i_q \leq N} f^2(i_1,\cdots,i_q).$$
\end{remark}

The next three examples show that homogeneous sums based on {\bf G}, {\bf E} and \textbf{P} can always be represented as `chaotic random variables'. The reader is referred to \cite{nourdin_peccati_book, nualart} and \cite{npr4}, respectively, for definitions and results concerning the Gaussian Wiener chaos and the Walsh chaos. An introduction to the Poisson Wiener chaos is provided in Section \ref{sec_prelim} below.

\begin{example}[Homogeneous sums based on {\bf G}] \label{example_Gauss}
\rm Let ${\bf G} = \{ G_i : i\geq 1 \}$ be defined as above. Without loss of generality, we can always assume that $G_i=I^G_1(h_i) = G(h_i) $ , for some isonormal Gaussian process $G=\{G(h): h\in \h \}$ based on  a real separable Hilbert space $\h$, where $\{h_i: i\geq 1 \}$ is an orthonormal system in $\h$, and $I_1^G$ denotes a Wiener-It\^o integral of order 1 with respect to $G$.   With this representation, one has that $Q_q(N,f,{\bf G})$ belongs to the so-called $q$-th {\it Gaussian Wiener chaos} of $G$. Indeed, we can write
$$Q_q(N,f,{\bf G}) = I^G_q(h), $$
where
\begin{equation}\label{e:tensorgauss}  h = \sum_{ i_1,\cdots,i_q }^N f(i_1,\cdots,i_q) h_{i_1} \otimes \cdots \otimes h_{i_q},
\end{equation}
and the symbol $\otimes$ is a usual tensor product. It is a classic result that random variables of the type $I_q^G(h)$, where $h$ is as in (\ref{e:tensorgauss}), are dense in the $q$th Wiener chaos of $G$ (see e.g. \cite[Chapter 1]{nualart}).
\end{example}

\begin{example}[Homogeneous sums based on {\bf E}] \label{example_Walsh}
\rm Fix $q\geq 1$, let $f: \mathbb{N}^q \rightarrow \R$ be a symmetric function vanishing on diagonals. We consider the Rademacher sequence ${\bf E} = \{ e_i : i\geq 1 \}$ defined above. Random variables with the form
$$ J_q =  \sum_{ i_1,\cdots,i_q } f(i_1,\cdots,i_q) e_{i_1}\cdots e_{i_q}, $$
where the series converge in $L^2(\p)$, compose the so-called \emph{$q$th Walsh chaos} of {\bf E}. (See \cite[Chapter IV]{ledoux_talagrand_book}, or Remark 2.7 in \cite{npr4}.)  In particular, let $f: [N]^q \rightarrow \R$ be a symmetric function vanishing on diagonals, then homogeneous sums of the type
$$Q_q(N,f,{\bf E}) = \sum_{ i_1,\cdots,i_q }^N f(i_1,\cdots,i_q) e_{i_1}\cdots e_{i_q}$$
are elements of $q$-th Walsh chaos of {\bf E}. Recall that the Walsh chaos enjoys the following decomposition property:
 for every $F\in L^2(\sigma({\bf E})) $ (that is, the set of
square integrable functional of the sequence ${\bf E}$), there exists a unique sequence of square-integrable symmetric functions vanishing on diagonals
$\{ f_q  : q\geq 1\}$, such that
$$ F =  E[F]+ \sum_{q\geq 1} q! \sum_{i_1<i_2<\ldots<i_q}
f_q(i_1, \ldots, i_q) e_{i_1} \cdots e_{i_q} , $$
where the double series converges in $L^2$.
\end{example}

\begin{example}[Homogeneous sums based on \textbf{P}] \label{example_Poisson}
\rm Let $\textbf{P} = \{ P_i : i\geq 1 \}$ be defined as above. Without loss of generality, we can always assume that, for every $i \geq 1$, $P_i=I_1(g_i) = I_1^{\hat{\eta}}(g_i) $, where $I_1 = I_1^{\hat{\eta}}$ indicates a single Wiener-It\^o integral with respect to a compensated Poisson measure $\hat{\eta}$ on some measurable space $(Z,\mathcal{Z}) $, with $\sigma$-finite and non-atomic control measure $\mu$. Here, $\textbf{g}=\{  g_i: i\geq 1\}$ is a collection of functions in $L^2(Z,\mathcal{Z},\mu)$ such that $g_i = {\bf 1}_{A_i}/\sqrt{\lambda_i}$, where the $\{A_i : i\geq 1\}$ are disjoint measurable sets such that $\mu(A_i) = \lambda_i$.  For instance, one may take $Z=\R_+$, $\mu=$ Lebesgue measure, $g_i=\1_{(\lambda_1+ \cdots + \lambda_{i-1} , \lambda_1+ \cdots + \lambda_{i} ]}$  for $i\geq 2$, and $g_1=\1_
{[0, \lambda_1]}$. It follows that the homogeneous sum $Q_q(N,f,\textbf{P})$ belongs to $q$-th Poisson Wiener chaos of $ \hat{\eta}$, since
$$Q_q(N,f,\textbf{P}) =I_q(g) = I^{\hat{\eta}}_q(g), $$
where
\begin{equation}\label{e:tensorpoiss}
g = \sum_{ i_1,\cdots,i_q }^N f(i_1,\cdots,i_q) g_{i_1} \otimes \cdots \otimes g_{i_q}.
\end{equation}
and $I_q = I^{\hat{\eta}}_q$ indicates a multiple Wiener-It\^o of order $q$ with respect to $\hat{\eta}$. It is well-known that random variables of the type $I_q(g)$, where $g$ is as in (\ref{e:tensorpoiss}), are dense in the $q$th Wiener chaos of $\hat{\eta}$ (see e.g. \cite[Chapter 5]{pt_book}),
\end{example}

Concerning the `universal nature' of homogeneous sum based on ${\bf G}$, the following result was proved in \cite{npr2} (see also \cite[Chapter 11]{nourdin_peccati_book}).

\begin{theorem}[Theorem 1.10 in \cite{npr2}]\label{t:unigauss} Homogeneous sums based on ${\bf G}$ are \textbf{\emph{universal}} with respect to normal approximations, in the following sense:
fix $q\geq 2$,  let $\{ N^{(n)}: n\geq 1 \}$ be a sequence  of integers going to infinity, and let $\{f^{(n)} : n\geq 1\}$ be a sequence of mappings, such that each function $f^{(n)}:[N^{(n)}]^q \rightarrow \R$ is symmetric and vanishes on diagonals. Assume that $ \E[Q_q(N^{(n)},f^{(n)},{\bf G})^2] \rightarrow 1$ as $n \rightarrow \infty$. Then, the following four properties are equivalent as $n \rightarrow \infty$.
\begin{enumerate}
  \item[\bf (1)] The sequence $\{ Q_q(N^{(n)},f^{(n)},{\bf G}) : n\geq 1\}$ converges in distribution to $Y \sim \N(0,1)$;
  \item[\bf (2)] $ \E[Q_q(N^{(n)},f^{(n)},{\bf G})^4] \rightarrow 3$;
 \item[\bf (3)]  for every sequence ${\bf X}=\{ X_i :  i\geq 1\}$ of independent  centered random variables with unit variance and such that $\sup_i\mathbb{E}|X_i|^{2+\epsilon}<\infty$, the sequence $\{ Q_q(N^{(n)},f^{(n)},{\bf X}) :  n\geq 1 \}$ converge in distribution to $Y \sim \N(0,1)$;
  \item[\bf (4)]  for every sequence ${\bf X}=\{ X_i :  i\geq 1\}$ of independent and identically distributed centered random variables with unit variance, the sequence $\{ Q_q(N^{(n)},f^{(n)},{\bf X}) :  n\geq 1 \}$ converge in distribution to $Y \sim \N(0,1)$.
\end{enumerate}
\end{theorem}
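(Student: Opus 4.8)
The plan is to close the cycle of implications $(1)\Leftrightarrow(2)$, $(2)\Rightarrow(3)$, $(3)\Rightarrow(4)$ and $(4)\Rightarrow(1)$. The equivalence $(1)\Leftrightarrow(2)$ is exactly the \emph{fourth moment theorem} of Nualart and Peccati \cite{nuapec}: since $Q_q(N^{(n)},f^{(n)},{\bf G})=I_q^G(h^{(n)})$ lives in the fixed $q$th Gaussian Wiener chaos of $G$ (Example \ref{example_Gauss}) and $\E[Q_q^2]\to 1$, convergence in law to $\N(0,1)$ holds if and only if $\E[Q_q^4]\to 3$. The implication $(4)\Rightarrow(1)$ is immediate: one specialises ${\bf X}={\bf G}$, which is an i.i.d. centered sequence of unit variance, so the conclusion of $(4)$ reduces to the CLT in $(1)$; likewise $(3)\Rightarrow(1)$, since the Gaussian moments $\sup_i\E|G_i|^{2+\epsilon}$ are finite. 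Thus the entire content of the theorem is the passage from a statement about the \emph{Gaussian} sums (encoded by the fourth-moment condition $(2)$) to statements valid for \emph{arbitrary} independent sequences: the implication $(2)\Rightarrow(3)$, and the removal of the extra moment hypothesis in $(3)\Rightarrow(4)$.

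The bridge between moments and the combinatorial structure of $f^{(n)}$ is furnished by the \emph{contraction kernels}. For a symmetric $f:[N]^q\to\R$ vanishing on diagonals and $r\in\{1,\ldots,q-1\}$, let $f\star_r f$ denote the function of $2(q-r)$ variables obtained by identifying and summing over $r$ pairs of arguments. Using the multiplication formula for multiple Wiener--It\^o integrals to expand $I_q^G(h^{(n)})^2$ and then taking expectations, one obtains the exact identity
\begin{equation*}
\E\big[Q_q(N^{(n)},f^{(n)},{\bf G})^4\big]-3\big(\E\big[Q_q(N^{(n)},f^{(n)},{\bf G})^2\big]\big)^2=\sum_{r=1}^{q-1}c_{q,r}\,\big\|f^{(n)}\star_r f^{(n)}\big\|^2,
\end{equation*}
for explicit constants $c_{q,r}>0$ (the contractions being symmetrised where needed). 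Hence condition $(2)$ is equivalent to $\|f^{(n)}\star_r f^{(n)}\|\to 0$ for every $r=1,\ldots,q-1$. The key observation is that the \emph{maximal influence} $\mathrm{Inf}(f^{(n)}):=\max_{1\le i\le N^{(n)}}\sum_{i_2,\ldots,i_q}f^{(n)}(i,i_2,\ldots,i_q)^2$ is controlled by the top contraction: since the diagonal of $f\star_{q-1}f$ coincides with the influence profile, one gets $\mathrm{Inf}(f^{(n)})\le\|f^{(n)}\star_{q-1}f^{(n)}\|\to 0$.

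With the maximal influence shown to vanish, the implication $(2)\Rightarrow(3)$ follows from the \emph{invariance principle} of Mossel, O'Donnell and Oleszkiewicz \cite{MOO}. For a low-influence multilinear homogeneous sum of fixed order $q$, that principle bounds the difference between $\E[\varphi(Q_q(\cdot,{\bf X}))]$ and $\E[\varphi(Q_q(\cdot,{\bf G}))]$, for smooth test functions $\varphi$, by a quantity that tends to $0$ as $\mathrm{Inf}(f^{(n)})\to 0$, provided $\sup_i\E|X_i|^{2+\epsilon}<\infty$. Combining this with $(2)\Rightarrow(1)$, which says that the Gaussian sum already converges to $\N(0,1)$, yields $Q_q(N^{(n)},f^{(n)},{\bf X})\to\N(0,1)$ for every such ${\bf X}$, which is exactly $(3)$.

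Finally, for $(3)\Rightarrow(4)$ one must discard the higher-moment assumption. Given an i.i.d. centered unit-variance sequence ${\bf X}$ with no control beyond $L^2$, I would introduce truncated and renormalised variables $X_i^M$ (chosen so that $\E[X_i^M]=0$, $\E[(X_i^M)^2]=1$ and $\sup_i\E|X_i^M|^{2+\epsilon}<\infty$ for each fixed $M$), apply $(3)$ to the i.i.d. family $\{X_i^M\}$, and then let $M\to\infty$. The error is handled by a standard multilinear $L^2$ estimate showing that $\|Q_q(\cdot,{\bf X})-Q_q(\cdot,{\bf X}^M)\|_{L^2}\to 0$ as $M\to\infty$, \emph{uniformly in $n$}, because $\E[Q_q^2]$ stays bounded; a three-$\varepsilon$ argument then transfers the CLT to ${\bf X}$. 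I expect the genuine obstacle to lie in the quantitative invariance principle of the third paragraph: establishing the Lindeberg-type replacement bound in terms of the maximal influence (and making the truncation in $(3)\Rightarrow(4)$ uniform in $n$) is where the analytic work concentrates, whereas the algebraic identities linking moments, contractions and influences are routine once the chaos calculus is in place.
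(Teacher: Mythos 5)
You should note that the paper you were given does not prove this statement at all: Theorem \ref{t:unigauss} is imported verbatim as Theorem 1.10 of \cite{npr2}, and the present authors only use it (in the proof of Theorem \ref{thm_universality_Poisson_1}) as a black box. Your sketch is essentially a correct reconstruction of the strategy of the cited source: {\bf (1)}$\Leftrightarrow${\bf (2)} by the Nualart--Peccati fourth moment theorem, control of the maximal influence by the norm of the $(q-1)$th contraction, a Mossel--O'Donnell--Oleszkiewicz-type invariance principle for {\bf (2)}$\Rightarrow${\bf (3)}, a truncation plus uniform-in-$n$ multilinear $L^2$ estimate for {\bf (3)}$\Rightarrow${\bf (4)}, and specialization to ${\bf X}={\bf G}$ to close the cycle; this is exactly how \cite{npr2} proceeds. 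One small inaccuracy: the displayed ``exact identity'' for $\E[Q_q^4]-3(\E[Q_q^2])^2$ is not correct as written with only the terms $\|f^{(n)}\star_r f^{(n)}\|^2$; the true formula contains, for each $r$, both the non-symmetrized and the symmetrized contraction norms with positive combinatorial coefficients. Since all coefficients are positive and $\|\widetilde{f\star_r f}\|\leq\|f\star_r f\|$, your conclusion that {\bf (2)} is equivalent to the vanishing of all contractions is nevertheless valid, so this is a presentational slip rather than a gap.
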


For several applications of Theorem \ref{t:unigauss} in random matrix theory, see \cite{NouPecMat}. The following negative result concerns homogeneous sums based on ${\bf E}$.
\begin{proposition}\label{p:walshnot}
Homogeneous sums inside the Walsh chaos are \textbf{\emph{not universal}} with respect to normal approximations.
\end{proposition}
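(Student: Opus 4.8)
The plan is to disprove the Walsh analogue of Theorem \ref{t:unigauss} by exhibiting a single explicit sequence of homogeneous sums of order $q=2$ that obeys a CLT when built on the Rademacher sequence $\mathbf E$, but fails to do so when the very same kernels are fed the Gaussian sequence $\mathbf G$. Since $\mathbf G$ is an admissible i.i.d. centered, unit-variance sequence, such an example contradicts the implication ``CLT for $\mathbf E$ $\Rightarrow$ CLT for every $\mathbf X$'', which is exactly what Walsh-universality would require. The construction concentrates all the mass of the kernel on pairs containing the first index: fix $N=N^{(n)}\to\infty$ and set $f^{(n)}(1,j)=f^{(n)}(j,1)=\tfrac{1}{2\sqrt{N-1}}$ for $2\le j\le N$, with $f^{(n)}\equiv 0$ otherwise. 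This $f^{(n)}$ is symmetric and vanishes on diagonals, so it is admissible in Definition \ref{d:hs}.

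For any $\mathbf X$ with the standard normalization one obtains the factorized form $Q_2(N,f^{(n)},\mathbf X)=X_1\cdot\tfrac{1}{\sqrt{N-1}}\sum_{j=2}^N X_j$, and a one-line computation gives $\E[Q_2^2]=1$ exactly, so the variance hypothesis of Theorem \ref{t:unigauss} holds for every input sequence. For the Rademacher side I would exploit the clean algebraic fact that $\{e_1e_j:2\le j\le N\}$ is itself a family of i.i.d. Rademacher variables (one checks $\p(e_1e_{j_1}=s_1,\dots)=\prod\tfrac12$ by conditioning on $e_1$). Hence $Q_2(N,f^{(n)},\mathbf E)=\tfrac{1}{\sqrt{N-1}}\sum_{j=2}^N e_1e_j$ is nothing but a normalized sum of $N-1$ i.i.d. centered unit-variance signs, and the classical CLT gives $Q_2(N,f^{(n)},\mathbf E)\to\N(0,1)$.

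For the Gaussian side, the second factor $\tfrac{1}{\sqrt{N-1}}\sum_{j=2}^N G_j$ is exactly $\N(0,1)$-distributed and independent of $G_1$, so $Q_2(N,f^{(n)},\mathbf G)$ has, for every $n$, the law of a product of two independent standard Gaussians. This law is not Gaussian: its fourth cumulant equals $6$ (indeed $\E[(G_1G_1')^4]=9$ while the variance is $1$). Consequently $Q_2(N,f^{(n)},\mathbf G)$ does not converge to $\N(0,1)$ — in fact its law does not even depend on $n$ — and this yields the contradiction that proves the proposition.

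There is no serious analytic obstacle here; the example is elementary and the real content is conceptual. The point I would stress is that the single index $1$ carries asymptotic ``influence'' tending to $1$, so no Lindeberg/influence-type condition holds, yet asymptotic normality survives in the Rademacher case purely because an independent symmetric sign multiplying an (asymptotically) symmetric Gaussian limit reproduces that same limit. It is precisely this degenerate, influence-insensitive mechanism that collapses for $\mathbf G$, where the product of two independent Gaussians is genuinely non-Gaussian. The lesson worth recording is that the Walsh chaos lacks the self-correcting structure that, in the Gaussian and (as established later in this paper) Poisson cases, forces a contraction/fourth-moment criterion to govern the CLT uniformly across all admissible input sequences.
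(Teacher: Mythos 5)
Your proposal is correct and is essentially the paper's own argument: the paper uses exactly the same counterexample (taken from \cite{npr2}), with the kernel concentrated on index sets $\{1,\ldots,q-1,s\}$ for general $q\geq 2$, so that the Rademacher sum factorizes as an independent sign times a normalized i.i.d.\ sum (hence CLT), while the Gaussian sum has the fixed non-Gaussian law of $G_1\cdots G_q$. Your version is the $q=2$ specialization of that construction, and since a single order suffices to contradict universality, the proof is complete as written.
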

\begin{proof}
To show this assertion, we present the counterexample described in \cite[p. 1956]{npr2}. Let  {\bf G} and {\bf E} be defined as above. Fix $q\geq 2$. For each $N \geq q$, we set
$$ f_N(i_1, i_2, \ldots, i_q)\!=\! \left\{
          \begin{array}{ll}
            1/(q! \sqrt{N-q+1}), &\!\! \hbox{ if } \{ i_1, i_2, \ldots, i_q \} \!=\! \{ 1,2, \ldots, q-1, s\} \text{ for  } q\leq s \leq N ;\\
           0 , & \hbox{ otherwise.}
          \end{array}
        \right.
 $$
The homogeneous sum thus defined is $$Q_q(N,f_N,{\bf E}) = e_1 e_2\cdots e_{q-1} \sum_{i=q}^{N} \cfrac{e_i}{\sqrt{N-q+1}} ,$$
with $\E[Q_q(N,f_N,{\bf E})]= 0 $ and
$\var[Q_q(N,f_N,{\bf E})]= 1 $.
Since $e_1 e_2\cdots e_{q-1} $ is a random sign independent of $\{ e_i: i\geq q \}$, we have that $Q_q(N,f_N,{\bf E}) \overset{\rm law}{\longrightarrow} \N(0,1)$, as $N\rightarrow \infty$, by virtue of the usual CLT. However, for every $N\geq 2$, one has that $Q_q(N,f_N,{\bf G}) \stackrel{\rm law}{=} G_1 G_2\cdots G_q$. Since $G_1 G_2\cdots G_q$ is not Gaussian for every $q\geq 2$, we deduce that $Q_q(N,f_N,{\bf G})$ does not converge in distribution to a normal random variable. \\
\end{proof}

\medskip

The principal aim of this paper is to provide a positive answer to the following question.
\begin{problem}
Are homogeneous sums based on ${\bf P}$ universal with respect to normal approximations? In other words: can we replace ${\bf G}$ with ${\bf P}$ inside the statement of Theorem \ref{t:unigauss}?
\end{problem}

We will see in Section \ref{sec_thm} that the answer is positive both in the one-dimensional and multi-dimensional cases. Our techniques are based on the tools developed in \cite{pstu,peczheng}, that are in turn recent developments of the so-called `Malliavin-Stein method'-- given by the combination of Stein's method and Malliavin calculus.

As a by-product of our achievements, we will also prove some new CLTs on the Poisson Wiener chaos. Indeed, in the forthcoming Theorem \ref{thm_HS_Poisson} and Theorem \ref{thm_HS_Poisson_multi}, we shall show that, in the special case of elements of the Poisson Wiener chaos that are also homogeneous sums, the sufficient conditions for normal approximations established in \cite{pstu, peczheng} turn out to be also necessary. As anticipated, this yields some new examples of `fourth moment theorems' -- such as the ones proved by Nualart and Peccati in \cite{nuapec} (see also Chapter 5 in \cite{nourdin_peccati_book}). Other `fourth moment theorems' in a Poisson setting can be found in \cite{PLR1}.

\smallskip

The paper is organized as follows.  In Section \ref{sec_prelim}, we discuss some preliminaries, including multiple Wiener-It\^o integrals on the Poisson space, product formulae and star contractions. In Section \ref{sec_thm} we present the main results, in both the one-dimensional and multi-dimensional cases, and demonstrate the universal nature of homogeneous sums inside the Poisson Wiener chaos. Section \ref{sec_proofs} is devoted to an important technical proposition as well as to the proofs of our main results.

\end{section}

\begin{section}{Some preliminaries} \label{sec_prelim}
\subsection{Poisson measures and integrals}
Let $(Z,\mathcal{Z},\mu) $ be a measure space such that
$Z$ is a Borel space and $\mu$ is a $\sigma$-finite non-atomic
Borel measure. We set $\mathcal{Z}_{\mu} = \{ B\in \mathcal{Z}: \mu(B)< \infty \}$. In what follows, we write
$\hat{\eta} = \{\hat{\eta}(B) : B\in \mathcal{Z}_{\mu} \} $ to indicate a {\it compensated Poisson measure} on $(Z,\mathcal{Z}) $ with {\sl control} $\mu$. In other words, $\hat{\eta} $ is a collection of random variables defined on some probability space $(\Omega, \mathcal{F}, \p) $, indexed by
the elements of $\mathcal{Z}_{\mu} $ and such that: (i) for every $B,C \in \mathcal{Z}_{\mu}$ such that $B \cap C = \varnothing$, the random variables $ \hat{\eta}(B)$ and $ \hat{\eta}(C)$ are independent;  (ii) for every $B \in \mathcal{Z}_{\mu} $, $\hat{\eta}(B) \stackrel{\rm law}{=} \eta(B)-\mu(B) $, where $\eta(B) $ is a Poisson random variable with paremeter $\mu(B) $. A random measure verifying property (i) is customarily called `completely random' or, equivalently, `independently scattered' (see e.g. the monograph \cite{pt_book} for a detailed discussion of these concepts).\\

In order to simplify the forthcoming discussion, we shall make use of the following conventions:
\begin{itemize}
  \item[--] We shall write interchangeably $\sum\limits_{1\leq i_1,\cdots,i_q\leq N}$ and $\sum\limits_{i_1,\cdots,i_q}^N$.
  \item[--] For every $k\geq 1$ and $f\in L^k(Z^q,\mathcal{Z}^q,\mu^q) := L^k(\mu^q)$, we write $\|f\|_{L^k}$ to indicate the norm $\|f\|_{L^k(\mu^q)}$.
  \item[--] For every $q\geq 2$, the class $L^k_s(\mu^q)$ as defined is the subspace of $L^k(\mu^q)$ of functions that are $\mu^q$-almost everywhere symmetric; also, one customarily writes $L^k_s(\mu^1) = L^k(\mu^1)=L^k_s(\mu) = L^k(\mu)$.
  \item[--] For any positive integer $N$, $[N]$ stands for the set $\{ 1,2,\cdots,N \}$.
  \item[--] For any two functions $f,g \in L^2(\mu)$, $f \otimes g$ is the
      tensor product of $f$ and $g$, that is, $f \otimes g(x,y)= f(x)g(y)$. Iterated tensor products of the type $f_1\otimes f_2\otimes \cdots \otimes f_q(x_1,\ldots,x_q)$ are defined by recursion.
\end{itemize}

\begin{definition}
For every deterministic function $h\in L^2(\mu)$, we write
\[I_1(h)=\hat{\eta}(h) = \int_Z h(z) \hat{\eta}(dz)\] to indicate the {\bf Wiener-It\^o
integral} of $h$ with respect to $\hat{\eta}$. For every $q\geq 2$ and every $f\in L^2_s(\mu^q)$, we denote by $I_q(f)$
the {\bf multiple Wiener-It\^o integral}, of order $q$, of $f$ with respect to $\hat{\eta}$. We also set $I_q(f)=I_q(\tilde{f})$, for every $f\in L^2(\mu^q)$, and $I_0(C)=C$ for every constant $C$. Here, $\tilde{f}$ is the symmetrization of the function $f$. For every $q\geq 1$, the collection of all random variables of the type $I_q(f)$, $f\in L^2_s(\mu^q)$, is denoted by $C_q$ and is called the $q$th {\bf Wiener chaos} of $\hat{\eta}$.
\end{definition}

We recall the following chaotic decomposition of $L^2(\sigma(\hat{\eta}))$ (that is, the space of all square-integrable functionals of $\hat{\eta}$):
\[
L^2(\sigma(\hat{\eta})) = \mathbb{R} \oplus \bigoplus_{q=1}^\infty C_q,
\]
where the symbol $\oplus$ denotes a direct sum in $L^2(\mathbb{P})$. The reader is referred e.g. to Peccati and Taqqu \cite{pt_book}, Privault \cite{priv} or Nualart and Vives \cite{nuaviv} for a complete discussion of multiple Wiener-It\^o integrals and their properties. The following proposition contains two fundamental properties that we will use in sequel.

\begin{proposition}\label{P : MWIone}
The following equalities hold for every $q,m\geq 1$, every $f\in L_s^2(\mu^q)$  and every $g\in L_s^2(\mu^m)$:
\begin{enumerate}
  \item[\rm 1.] $\E[I_q(f)]=0$,
  \item[\rm 2.] $\E[I_q(f) I_m(g)]= q!\langle f,g  \rangle_{L^2(\mu^q)} \1_{(q=m)} $
  {\rm (isometric property).}
\end{enumerate}
\end{proposition}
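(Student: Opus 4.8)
The plan is to establish both identities first on a dense class of elementary kernels and then extend by continuity, noting at the outset that property 1 is the special case $m=0$ of property 2: since $I_0(1)=1$, we have $\E[I_q(f)]=\E[I_q(f)\,I_0(1)]$, which property 2 forces to vanish for $q\geq 1$. So it suffices to prove property 2, i.e. the isometric/orthogonality relation, and read off property 1 as a corollary.

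First I would fix a finite collection of pairwise disjoint sets $A_1,\dots,A_n\in\mathcal{Z}_\mu$ and consider the \emph{off-diagonal simple kernels}, i.e. symmetric functions of the form $f=\sum c_{i_1\cdots i_q}\,\1_{A_{i_1}\times\cdots\times A_{i_q}}$, where the sum runs over tuples with pairwise distinct indices and the coefficient array is symmetric. On such kernels the multiple integral is defined by replacing each indicator factor with the corresponding compensated mass, so that $I_q(f)=\sum c_{i_1\cdots i_q}\,\hat\eta(A_{i_1})\cdots\hat\eta(A_{i_q})$. The point of requiring distinct indices (the vanishing-on-diagonals condition) is that every factor inside a single product involves a distinct set, and hence, by property (i) of $\hat\eta$, the factors are mutually independent.

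Next I would compute $\E[I_q(f)\,I_m(g)]$ for two such kernels, which after refining partitions I may assume are built from the same disjoint family. Expanding the product and taking expectations term by term, I would use independence over disjoint sets together with $\E[\hat\eta(A_i)]=0$ and $\E[\hat\eta(A_i)^2]=\mu(A_i)$. A product of independent centered variables in which some set appears exactly once has zero expectation; since within each of the two products every set appears exactly once, a term survives only when each occurring set is matched between the $f$-tuple and the $g$-tuple. This matching forces $q=m$ and identifies the two index sets, each matched set contributing a factor $\E[\hat\eta(A_i)^2]=\mu(A_i)$. Summing over the $q!$ permutations realizing the match and invoking symmetry of the coefficient arrays collapses the double sum to $q!\sum c_{i_1\cdots i_q}d_{i_1\cdots i_q}\mu(A_{i_1})\cdots\mu(A_{i_q})$, which is exactly $q!\langle f,g\rangle_{L^2(\mu^q)}$. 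This is the isometric relation on simple kernels, and its diagonal case $f=g$, $q=m$ gives $\|I_q(f)\|_{L^2(\p)}^2=q!\,\|f\|_{L^2(\mu^q)}^2$.

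Finally I would extend to arbitrary $f\in L^2_s(\mu^q)$ and $g\in L^2_s(\mu^m)$ by density. Because $\mu$ is non-atomic, the diagonals of $Z^q$ are $\mu^q$-null, so off-diagonal simple kernels are dense in $L^2_s(\mu^q)$; the isometry just established shows $I_q$ is, up to the factor $\sqrt{q!}$, an isometry, hence extends to a bounded linear map, and both identities pass to the $L^2(\p)$-limit via Cauchy--Schwarz. I expect the main obstacle to lie in the combinatorial bookkeeping of the second step, namely correctly isolating the surviving terms and producing the factor $q!$ from the matching permutations while respecting the symmetry of $c$ and $d$, together with the careful use of non-atomicity to guarantee that the diagonal carries no mass and that the simple-kernel approximation is legitimate.
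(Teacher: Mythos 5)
Your argument is correct, and it is the standard one. Note that the paper itself does not prove this proposition: it is stated as a known fact, with the reader referred to Peccati--Taqqu, Privault, and Nualart--Vives for the construction of multiple Poisson integrals and their properties. Your proof is essentially the construction given in those references: define $I_q$ on symmetric simple kernels vanishing on diagonals, use complete randomness (property (i) of $\hat\eta$) together with $\E[\hat\eta(A)]=0$ and $\E[\hat\eta(A)^2]=\mu(A)$ to see that only the terms in which the two index tuples are permutations of one another survive, producing the factor $q!$ by symmetry of the coefficient arrays, and then extend by the isometry to all of $L^2_s(\mu^q)$, where non-atomicity of $\mu$ guarantees that the diagonals are $\mu^q$-null and that such simple kernels are dense. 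One small remark: reading property 1 as ``property 2 with $m=0$'' is only a heuristic, since the statement assumes $q,m\geq 1$; but this is harmless, because in your elementary-kernel step each term of $I_q(f)$ is a product of independent centered variables and hence already has zero mean, and this passes to the $L^2$-limit, so property 1 is obtained directly.
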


\begin{remark}
\rm
For every $q\geq 1$ and every $f\in L^2_s(\mu^q)$, we shall also denote by $I^G_q(f)$ the {\bf multiple Wiener-It\^o integral}, of order $q$, of $f$ with respect to an isonormal process $G$ over the Hilbert space $\mathfrak{H} = L^2(Z,\mathcal{Z},\mu)$. A detailed introduction to these objects can be found in  \cite{nourdin_peccati_book,nualart, pt_book}.
\end{remark}

\subsection{Product formulae}

In order to give a simple description of the Product formulae for multiple Poisson integrals (see formula (\ref{product})), we (formally) define a contraction kernel $f \star_r^l g$ on $Z^{p+q-r-l}$ for functions $f\in L^2_s(\mu^p) $ and $g \in L^2_s(\mu^q) $, where $p,q \geq 1$, $r=1,\ldots, p\wedge q$ and $l=1,\ldots,r $, as follows:
\begin{eqnarray}
& & f \star_r^l
g(\gamma_1,\ldots,\gamma_{r-l},t_1,,\ldots,t_{p-r},s_1,,\ldots,s_{q-r}) \label{contraction} \\
&=& \int_{Z^l} \mu^l(dz_1,\ldots,dz_l)
f(z_1,,\ldots,z_l,\gamma_1,\ldots,\gamma_{r-l},t_1,\ldots,t_{p-r}) \nonumber \\
& & \quad\quad\quad\quad\quad\quad\quad\quad\quad\quad\quad\quad \times g(z_1,,\ldots,z_l,\gamma_1,\ldots,\gamma_{r-l},s_1,\ldots,s_{q-r}). \nonumber
\end{eqnarray}
In other words, the star operator `$\,\star_r^l\,$' reduces the number of variables in the tensor product of $f$ and $g$ from $p+q$ to $p+q-r-l$: this operation is realized by first identifying $r$ variables in $f$ and $g$, and then by integrating out $l$ among them. We also use the notation
\begin{eqnarray}
& & f \otimes_r f = f \star_r^r
g(t_1,,\ldots,t_{p-r},s_1,,\ldots,s_{q-r}) \label{contraction2} \\
&=& \int_{Z^r} \mu^l(dz_1,\ldots,dz_r)
f(t_1,\ldots,t_{p-r},z_1,\ldots,z_r) \times g(s_1,\ldots,s_{q-r},z_1,\ldots,z_r). \nonumber
\end{eqnarray}
The operator $f \otimes_r f$ and its symmetrization $f \tilde{\otimes}_r f$ play a fundamental role in the derivation of limit theorems inside the Gaussian Wiener-It\^o chaos, see e.g. \cite{nourdin_peccati_book, nuapec}.\\

We present here an important {\it product formula} for Poisson multiple integrals (see e.g. \cite{kab, pt_book, surg2} for a proof).
\begin{proposition}
[Product formula] Let $f\in L^2_s(\mu^p) $ and $g\in
L^2_s(\mu^q)$, $p,q\geq 1 $, and suppose moreover that $f \star_r^l g
\in L^2(\mu^{p+q-r-l})$ for every $r=1,\ldots,p\wedge q $ and $
l=1,\dots,r$ such that $l\neq r $. Then,
\begin{equation} \label{product}
I_p(f)I_q(g) = \sum_{r=0}^{p\wedge q} r!
\left(
\begin{array}{c}
  p\\
  r\\
\end{array}
\right)
 \left(
\begin{array}{c}
  q\\
  r\\
\end{array}
\right)
 \sum_{l=0}^r
 \left(
\begin{array}{c}
  r\\
  l\\
\end{array}
\right)  I_{p+q-r-l} \left(\widetilde{f\star_r^l g}\right),
\end{equation}
 with the tilde $\sim$ indicating a symmetrization, that
is,
$$\widetilde{f\star_r^l g}(x_1,\ldots,x_{p+q-r-l})=\cfrac{1}{(p+q-r-l)!}
\sum_\sigma f\star_r^l g(x_{\sigma(1)},\ldots,x_{\sigma(p+q-r-l)}), $$
where $\sigma $ runs over all $(p+q-r-l)! $ permutations of the
set $\{1,\ldots,p+q-r-l \}$.

\end{proposition}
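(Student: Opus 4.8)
The plan is to establish (\ref{product}) by first reducing to simple integrands and then performing a direct combinatorial expansion that exploits the specific moment structure of $\hat{\eta}$. Concretely, I would fix a finite partition of $Z$ into pairwise disjoint sets $B_1,\dots,B_M\in\mathcal{Z}_{\mu}$ and work with symmetric functions that are constant on each product $B_{j_1}\times\cdots\times B_{j_p}$ indexed by \emph{pairwise distinct} $j_1,\dots,j_p$; since $\mu$ is non-atomic, such functions are dense in $L^2_s(\mu^p)$ and ignore the diagonals, where $\mu^p$ charges no mass. On these functions $I_p(f)$ is a finite linear combination of products $\hat{\eta}(B_{j_1})\cdots\hat{\eta}(B_{j_p})$ of independent centered random variables. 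The reduction to arbitrary $f,g$ is then justified by $L^2$-continuity: the isometric property of Proposition \ref{P : MWIone} controls each multiple integral, while the standing hypothesis $f\star_r^l g\in L^2(\mu^{p+q-r-l})$ ensures that every kernel on the right-hand side of (\ref{product}) is a genuine $L^2$ function, so the identity survives the limit as the partition is refined.

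Next I would expand the product $I_p(f)I_q(g)$ termwise and regroup the resulting monomials in the $\hat{\eta}(B_j)$ according to which cells are shared between the $f$-indices and the $g$-indices. Distinct cells yield independent factors and reproduce, after matching, the tensor/contraction structure of (\ref{contraction}); the whole subtlety is concentrated in a single shared cell $B$ with $\mu(B)=\lambda$, where one meets the square $\hat{\eta}(B)^2$. The key algebraic input is the Poisson analogue of the It\^o table, namely the chaos decomposition $\hat{\eta}(B)^2=\lambda+\hat{\eta}(B)+I_2(\mathbf{1}_B^{\otimes 2})$, which splits the square into its projections onto the chaoses of orders $0$, $1$ and $2$. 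This trichotomy is exactly what dictates the fate of a matched pair of arguments $z_i=z_j$, with $z_i$ coming from $f$ and $z_j$ from $g$: either the pair is fully integrated against $\mu$ (the order-$0$ piece, a genuine contraction), or the two variables merge into one variable retained on the diagonal (the order-$1$ piece), or they survive as two separate top-order variables (the order-$2$ piece, no matching).

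Carrying out this accounting over all cells produces the announced coefficients. Choosing which $r$ of the $p$ arguments of $f$ are matched, which $r$ of the $q$ arguments of $g$ they are matched to, and the bijection between them, contributes $\binom{p}{r}\binom{q}{r}r!$; splitting these $r$ matched pairs into the $l$ that are integrated out and the $r-l$ that are retained on the diagonal contributes $\binom{r}{l}$; and the $p+q-r-l$ surviving variables, namely the $(p-r)+(q-r)$ unmatched ones together with the $r-l$ merged ones, assemble precisely into the symmetrized kernel $\widetilde{f\star_r^l g}$ of (\ref{contraction}). Summing over $0\le r\le p\wedge q$ and $0\le l\le r$ then reconstructs the right-hand side of (\ref{product}).

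The step I expect to be the main obstacle is the rigorous diagonal bookkeeping together with the passage to the limit. One must verify that the $r-l$ retained variables are exactly the undigested $\gamma$-arguments of definition (\ref{contraction}), and that refining the partition does not generate spurious diagonal contributions: this is where the extra $l$-summation, which has no counterpart in the Gaussian product formula, genuinely enters, reflecting the fact that powers of $\hat{\eta}(B)$ do not collapse to their top-order chaos. I would handle it by expanding every power $\hat{\eta}(B)^k$ in the Charlier basis, tracking the order of each resulting chaos, and invoking non-atomicity to discard, in the limit, all cross terms supported on diagonals of positive codimension; the $L^2$ bounds supplied by the hypothesis on the contractions $f\star_r^l g$ then legitimise the extension from simple kernels to arbitrary symmetric $f$ and $g$.
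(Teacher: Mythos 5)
The paper itself does not prove this proposition; it simply refers the reader to Kabanov, Surgailis, and the Peccati--Taqqu book, and your discretization strategy is essentially the classical route followed in those references. The algebraic core of your sketch is sound: for simple kernels vanishing on diagonals only squares $\hat{\eta}(B)^2$ can occur in the expansion of $I_p(f)I_q(g)$, the identity $\hat{\eta}(B)^2=\mu(B)+\hat{\eta}(B)+I_2(\1_B^{\otimes 2})$ is exactly the right structural input (it is the case $p=q=1$ of the formula), and your counting $r!\binom{p}{r}\binom{q}{r}$ for the matchings and $\binom{r}{l}$ for the choice of integrated versus retained pairs reproduces the stated coefficients.

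The genuine gap is the passage from simple kernels to general $f,g$. Your justification -- ``$L^2$-continuity'' plus the hypothesis $f\star_r^l g\in L^2(\mu^{p+q-r-l})$ -- does not work as stated. For $l<r$ the map $(f,g)\mapsto f\star_r^l g$ is \emph{not} a continuous bilinear map from $L^2\times L^2$ into $L^2$: these contractions restrict the product to a diagonal rather than integrating it out (for instance $f\star_1^0 g(\gamma,t,s)=f(\gamma,t)g(\gamma,s)$, whose squared $L^2$ norm is a mixed fourth-moment quantity not dominated by $\|f\|_{L^2}^2\|g\|_{L^2}^2$, in contrast with the Cauchy--Schwarz bound available when $l=r$). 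Hence $f_n\to f$ and $g_n\to g$ in $L^2$ does not yield $f_n\star_r^l g_n\to f\star_r^l g$ in $L^2$, and assuming that the \emph{limit} kernel lies in $L^2$ does not repair the convergence of the approximating contractions. A related difficulty sits on the left-hand side: $I_p(f_n)I_q(g_n)\to I_p(f)I_q(g)$ only in $L^1$, so you cannot simply project onto each fixed chaos, since chaos projections are continuous in $L^2$ but not in $L^1$. To close the argument one must choose the approximations more carefully -- e.g.\ first establish the formula for bounded kernels supported in products of sets of finite $\mu$-measure, where refining-partition (conditional-expectation type) approximations make \emph{all} contractions converge by dominated convergence, and then perform a separate limiting argument exploiting the hypothesis $f\star_r^l g\in L^2$, as is done in the cited references. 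As written, this extension step is asserted rather than proved, and it is precisely where the stated integrability hypothesis has to be used in a nontrivial way.
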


Fix integers $p,q \geq 0$ and $|q-p| \leq k \leq p+q$, consider two kernels $f\in L^2_s(\mu^p)$ and $g \in L^2_s(\mu^q)$, and recall the multiplication formula (\ref{product}). We will now introduce an operator
$G_k^{p,q}$, transforming the function $f$, of $p$ variables,
and the function $g$, of $q$ variables, into a function $G_k^{p,q}(f,g)$, of
$k$ variables. More precisely, for $p,q,k $ as above, we define the function $(z_1,\ldots,z_k)\mapsto G_k^{p,q}(f,g)(z_1,\ldots,z_k)$, from $Z^k$ into $\R$, as follows:
\begin{equation}\label{force}
G_k^{p,q}(f,g)(z_1,\ldots,z_k) \!=\! \sum_{r=0}^{p\wedge q} \sum_{l=0}^{r}
\1_{(p+q-r-l=k)} r!
\left(
\begin{array}{c}
  p\\
  r\\
\end{array}
\right)
 \left(
\begin{array}{c}
  q\\
  r\\
\end{array}
\right)
 \left(
\begin{array}{c}
  r\\
  l\\
\end{array}
\right)  \widetilde{f\star_r^l g}(z_1,\ldots,z_k) ,
\end{equation}
where the tilde $\sim$ means symmetrization, and the star contractions are defined in formula (\ref{contraction}) and the subsequent discussion. Observe the following three special cases: (i) when $p=q=k=0$, then $f$ and $g$ are both real constants, and $G_0^{0,0}(f,g) = f\times g$, (ii) when $ p=q\geq 1$ and $k=0$, then $G_0^{p,p}(f,g) = p!\langle f,g \rangle_{L^2(\mu^p)} $, (iii) when $p=k=0$ and $q>0$ (then, $f$ is a constant), $G_0^{0,p}(f,g)(z_1,\ldots,z_q) = f\times g(z_1,\ldots,z_q)$. By using this notation, (\ref{product}) becomes
\begin{equation} \label{product2}
I_p(f)I_q(g) = \sum_{k=|q-p|}^{p+q} I_k(G_k^{p,q}(f,g)).
\end{equation}
The advantage of representation (\ref{product2}) (as opposed to (\ref{product})) is
that the RHS of (\ref{product2}) is an \emph{orthogonal sum}, a feature that will simplify the computations to follow. \\

\end{section}

\begin{section}{Main results} \label{sec_thm}

\begin{subsection}{One-dimensional case: fourth moments and universality}
We recall the following theorem, first proved in \cite{nuapec}, stating that the convergence in law of a sequence of Gaussian Wiener integrals towards a normal distribution can be characterized by their variances and fourth moments. See \cite[Chapter 5]{nourdin_peccati_book} for a detailed discussion, as well as examples and bibliographic remarks.

\begin{theorem}[See \cite{no, nuapec}]
\label{thm_equiv_Gauss_copy}
Fix $q\geq 2$, let $h^{(n)}\in L^2_s(\mu^q)$, $n\geq 1$, and let
$$Z^{(n)} = I^G_q(h^{(n)}), \quad n\geq 1,$$
 be a sequence of random variables having the form of a multiple Wiener-It\^o integral of order $q$, of $h^{(n)}$ with respect to an isonormal Gaussian process $G$ over the Hilbert space $\mathfrak{H} = L^2(\mu)$. Assume that $\lim_{n\to\infty} \var(Z^{(n)})=\lim_{n\to\infty} \E\left[\big(Z^{(n)}\big)^2 \right]=1$. Then, the following three assertions are equivalent as $n\rightarrow \infty$:
\begin{itemize}
  \item[\bf (1)] $Z^{(n)} \overset{\rm law}{\longrightarrow} Y\sim \mathcal{N}(0,1)$;
  \item[\bf (2)] $\E\left[\big(Z^{(n)}\big)^4 \right] \rightarrow \E[Y^4] = 3$ ;
  \item[\bf (3)] $\forall r=1,\ldots, q-1$, $\| h^{(n)} \otimes_r  h^{(n)} \|_{L^2} \rightarrow 0$, where the contraction $\otimes_r = \star_r^r$ is defined according to {\rm (\ref{contraction2})}.
\end{itemize}
\end{theorem}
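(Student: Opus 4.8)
The plan is to establish the cycle of implications $(3)\Rightarrow(1)\Rightarrow(2)\Rightarrow(3)$, from which the full equivalence follows. The whole argument rests on two structural features of the Gaussian Wiener chaos: the product formula for multiple Wiener-It\^o integrals with respect to $G$, which---unlike its Poisson counterpart \eqref{product}---involves only the full contractions $\otimes_r=\star_r^r$ and no lower terms, and the \emph{hypercontractivity} of the chaos, ensuring that on a fixed $C_q$ all the $L^p(\p)$-norms ($p\geq 2$) are equivalent.

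First I would record the moment computations. By the Gaussian analogue of the isometry in Proposition \ref{P : MWIone}, $\E[(Z^{(n)})^2]=q!\,\|h^{(n)}\|_{L^2}^2$, so the normalization $\var(Z^{(n)})\to 1$ reads $q!\,\|h^{(n)}\|_{L^2}^2\to 1$. Squaring $Z^{(n)}=I^G_q(h^{(n)})$ via the product formula and applying the isometry once more (the chaoses $I_{2q-2r}$ being orthogonal for distinct $r$) yields the exact expansion
\[
\E[(Z^{(n)})^4]=\sum_{r=0}^{q}(r!)^2\binom{q}{r}^4(2q-2r)!\,\|h^{(n)}\tilde{\otimes}_r h^{(n)}\|_{L^2}^2 .
\]
Here the $r=0$ and $r=q$ terms combine, after expanding the symmetrization norm $\|h^{(n)}\tilde{\otimes}_0 h^{(n)}\|_{L^2}^2$, to produce the Gaussian value $3\,(q!)^2\|h^{(n)}\|_{L^2}^4$, while every intermediate term is nonnegative. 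The decisive outcome is the identity
\[
\E[(Z^{(n)})^4]-3\big(\E[(Z^{(n)})^2]\big)^2=\sum_{r=1}^{q-1}c_{q,r}\Big(\|h^{(n)}\otimes_r h^{(n)}\|_{L^2}^2+d_{q,r}\,\|h^{(n)}\tilde{\otimes}_r h^{(n)}\|_{L^2}^2\Big)
\]
with strictly positive constants $c_{q,r},d_{q,r}$. Since every summand is nonnegative, the left-hand side tends to $0$ if and only if $\|h^{(n)}\otimes_r h^{(n)}\|_{L^2}\to 0$ for each $r=1,\dots,q-1$, which is exactly $(2)\Leftrightarrow(3)$; the passage between symmetrized and unsymmetrized contraction norms is handled separately, using that symmetrization contracts the $L^2$-norm together with the relation $\|h\otimes_r h\|_{L^2}=\|h\otimes_{q-r}h\|_{L^2}$, so that the two families are simultaneously negligible.

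The implication $(3)\Rightarrow(1)$ is the analytic core, and I would obtain it through the Malliavin-Stein method rather than the original martingale time-change. Writing $D$, $\delta$, $L$ for the derivative, divergence and Ornstein-Uhlenbeck operators attached to $G$, one has for $F=I^G_q(h)$ the relations $-L^{-1}F=\tfrac1q F$ and $DF=q\,I_{q-1}^G(h(\cdot,\ast))$, so that $\langle DF,-DL^{-1}F\rangle_{\h}=\tfrac1q\|DF\|_{\h}^2$ and the Stein bound gives, for the normalized variable, $d_{TV}(Z^{(n)},\N(0,1))\leq C\,\sqrt{\var(\tfrac1q\|DZ^{(n)}\|_{\h}^2)}+C\,\big|\,\E[(Z^{(n)})^2]-1\,\big|$. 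Expanding $\|DF\|_{\h}^2$ once more with the product formula shows that $\var(\tfrac1q\|DF\|_{\h}^2)$ is itself a positive linear combination of the $\|h^{(n)}\otimes_r h^{(n)}\|_{L^2}^2$, $r=1,\dots,q-1$; hence $(3)$ forces $d_{TV}(Z^{(n)},\N(0,1))\to 0$, giving $(1)$. Finally, $(1)\Rightarrow(2)$ is a soft consequence of hypercontractivity: since each $Z^{(n)}$ lives in $C_q$, the family $\{(Z^{(n)})^4\}$ is uniformly integrable, so convergence in law together with $\E[(Z^{(n)})^2]\to1$ upgrades to convergence of the fourth moments towards $\E[Y^4]=3$.

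The main obstacle is the explicit expansion step: deriving the fourth-moment identity and the companion formula for $\var(\tfrac1q\|DF\|_{\h}^2)$, and in particular verifying that after symmetrization every intermediate coefficient is \emph{strictly positive}---this positivity is precisely what makes control of the fourth moment \emph{sufficient}, not merely necessary, for asymptotic normality. The elementary but delicate equivalence between the symmetrized and unsymmetrized contraction norms must also be carried out with care, since it is the unsymmetrized norms $\|h^{(n)}\otimes_r h^{(n)}\|_{L^2}$ that figure in condition $(3)$.
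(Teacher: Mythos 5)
Your proposal is a correct proof sketch, but note that the paper itself does not prove this statement at all: Theorem \ref{thm_equiv_Gauss_copy} is recalled as a known result, with the proof delegated to the references \cite{no, nuapec} (and to \cite[Chapter 5]{nourdin_peccati_book}), so there is no internal proof to compare against. What you wrote is essentially the now-standard argument from the Nourdin--Peccati framework: the exact fourth-moment expansion obtained from the Gaussian product formula (your identity, with the $r=0$ and $r=q$ terms recombining into $3(q!)^2\|h^{(n)}\|_{L^2}^4$ plus a positive combination of the norms $\|h^{(n)}\otimes_r h^{(n)}\|_{L^2}^2$ and $\|h^{(n)}\tilde{\otimes}_r h^{(n)}\|_{L^2}^2$, $1\leq r\leq q-1$) gives {\bf (2)}$\Leftrightarrow${\bf (3)}; the Malliavin--Stein bound $d_{TV}(Z^{(n)},\N(0,1))\leq C\sqrt{\var(\tfrac1q\|DZ^{(n)}\|_{\h}^2)}+C|\E[(Z^{(n)})^2]-1|$, with $\var(\tfrac1q\|DZ^{(n)}\|_{\h}^2)$ controlled by the contraction norms, gives {\bf (3)}$\Rightarrow${\bf (1)}; and hypercontractivity on a fixed chaos gives the uniform integrability needed for {\bf (1)}$\Rightarrow${\bf (2)}. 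This differs from the original proofs cited by the paper: \cite{nuapec} argued via the Dambis--Dubins--Schwarz theorem and time-changed Brownian martingales, while \cite{no} used a Malliavin-calculus characterization of the limit ($\tfrac1q\|DZ^{(n)}\|_{\h}^2\to 1$ in $L^2$) without Stein's method; your route buys quantitative total-variation bounds and is the one that generalizes to the Poisson setting exploited in this paper. Indeed, your fourth-moment expansion step closely parallels what the authors actually do in their Poisson analogue (proof of Theorem \ref{thm_HS_Poisson}, formulae (\ref{moment4_expression})--(\ref{expression_G_2q_q})), where the same recombination of the extreme terms appears via \cite[formula (11.6.30)]{pt_book}. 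The only points deserving care, which you flag yourself, are the strict positivity of all coefficients in the expansion and the comparison $\|h^{(n)}\tilde{\otimes}_r h^{(n)}\|_{L^2}\leq\|h^{(n)}\otimes_r h^{(n)}\|_{L^2}$ used to pass between symmetrized and unsymmetrized contractions; both are standard and your treatment is sound.
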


Extending Theorem \ref{thm_equiv_Gauss_copy} to multiple integrals with respect to a Poisson measure is a demanding task, since the product formula (\ref{product}) (which is more involved than in the Gaussian case) quickly leads to some inextricable expressions for moments of order four. A partial `fourth moment theorem' can be found in \cite[Theorem 2]{pt}, in the special case of double Poisson integrals. We now present an exact analogous of Theorem \ref{thm_equiv_Gauss_copy} for homogeneous sums inside a fixed Poisson Wiener chaos. Its proof, together with the one of the subsequent Theorem \ref{thm_universality_Poisson_1}, is deferred to Section \ref{sec_proofs}.

\begin{theorem}[Fourth moment theorem for Poisson sums] \label{thm_HS_Poisson}
Let $\{ \lambda_i : i\geq 1\}$ be a collection of positive real numbers, and assume that $\inf\limits_{i\geq 1} \lambda_i = \alpha >0$. Let ${\bf P}=\{ P_i : i\geq 1\}$ be a collection of independent random variables verifying {\rm (\ref{e:pi})}. Fix an integer $q\geq 1$. Let $\{ N^{(n)}, f^{(n)}: n\geq 1\}$ be a double sequence such that  $\{ N^{(n)} : n\geq 1\}$ is a sequence of integers diverging to infinity, and each $f^{(n)}: [N^{(n)}]^{q} \rightarrow \R$ is symmetric and vanishes on diagonals. We set
$$ F^{(n)} = Q_q(N^{(n)},f^{(n)}, {\bf P}) = \sum_{i_1, \cdots, i_q}^{N^{(n)}} f^{(n)}(i_1, \cdots, i_q) P_{i_1} \cdots P_{i_q} = I_q(g^{(n)}) , $$
where
$$g^{(n)} = \sum_{i_1,\cdots,i_q}^{N^{(n)}} f^{(n)}(i_1,\cdots,i_q) g_{i_1} \otimes \cdots \otimes g_{i_q} ,\quad n\geq 1,$$
and the representation of $F^{(n)}$ as a multiple Wiener-It\^o integral is the same as in Example {\rm \ref{example_Poisson}}. Suppose that  $ \E\left[\big(F^{(n)}\big)^2 \right] \rightarrow \sigma^2\in (0,\infty) $.
 Then, the following two statements are equivalent, as $n\to\infty$:
\begin{itemize}
  \item[\bf (1)] $F^{(n)} \overset{\rm law}{\longrightarrow}Y\sim \mathcal{N}(0,\sigma^2)$;
  \item[\bf (2)] $\E\left[\big(F^{(n)}\big)^4 \right] \rightarrow \E[Y^4] = 3 \sigma^4$.
 \end{itemize}
When $q=1$, either one  of conditions {\bf (1)}--{\bf (2)} is equivalent to
\begin{itemize}
\item[\bf (3a)]  $\sum_{i=1}^{N^{(n)}} f^{(n)}(i)^4\frac{1}{\lambda_i} \rightarrow 0.$
\end{itemize}
Finally, when $q\geq 2$, either one of conditions {\bf (1)}--{\bf (2)} is equivalent to either one of the following two equivalent conditions {\bf (3b)}--{\bf (3b')}
\begin{itemize}
\item[\bf (3b)]  $\int_{Z^q} \big(g^{(n)} \big)^4 \rightarrow 0$ and
$\forall r=1,\cdots,q$, $\forall l=1,\cdots,r\wedge (q-1)$,
      $ \| g^{(n)} \star_r^l g^{(n)} \|_{L^2} \rightarrow 0 $, where the star contractions $\star_r^l$ are defined according to {\rm (\ref{contraction})};
      \item[\bf (3b')]
$\forall r=1,\cdots,q-1$,
      $ \| g^{(n)} \star_r^r g^{(n)} \|_{L^2}=\| g^{(n)} \otimes_r g^{(n)} \|_{L^2} \rightarrow 0 $.

\end{itemize}
\end{theorem}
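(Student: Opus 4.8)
The plan is to prove the chain of implications (3b) $\Rightarrow$ (1) $\Rightarrow$ (2) $\Rightarrow$ (3b$'$) $\Rightarrow$ (3b), disposing first of the case $q=1$, where the absence of cross terms makes everything explicit. For $q=1$ one has $F^{(n)}=\sum_i f^{(n)}(i)P_i$, a sum of independent centered variables, and a direct computation using independence gives $\E[(F^{(n)})^4]=3\sigma_n^4+\sum_i f^{(n)}(i)^4/\lambda_i$ with $\sigma_n^2=\E[(F^{(n)})^2]$. Thus $\E[(F^{(n)})^4]-3\sigma_n^4$ equals the (non-negative) fourth cumulant of $F^{(n)}$, giving (2) $\Leftrightarrow$ (3a) at once. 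For (3a) $\Rightarrow$ (1) I would observe that (3a) forces every cumulant of order $\geq 3$ to vanish: for instance the third cumulant $\sum_i f^{(n)}(i)^3\lambda_i^{-1/2}$ is bounded by $(\sum_i f^{(n)}(i)^4\lambda_i^{-1})^{1/2}\sigma_n$ via Cauchy--Schwarz, and analogously for higher orders, whence the CLT follows; the implication (1) $\Rightarrow$ (2) is uniform integrability, treated below.

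Next I would dispatch the two `soft' implications, valid for every $q\geq 1$. The implication (3b) $\Rightarrow$ (1) is a direct consequence of the quantitative CLTs of \cite{pstu,peczheng}: the Malliavin--Stein bound there controls the distance between $F^{(n)}=I_q(g^{(n)})$ and $\N(0,\sigma^2)$ by a linear combination of the contraction norms $\|g^{(n)}\star_r^l g^{(n)}\|_{L^2}$ and of $\int_{Z^q}(g^{(n)})^4$, all of which tend to $0$ under (3b). For (1) $\Rightarrow$ (2) I would exploit that each $F^{(n)}$ lives in the fixed $q$th Poisson chaos: a hypercontractivity estimate on that chaos (available because $\inf_i\lambda_i=\alpha>0$ keeps the moments of the $P_i$ under control, so the fourth moments $\E[(F^{(n)})^4]$ stay bounded) yields uniform integrability of $\{(F^{(n)})^4\}$, upgrading convergence in law to convergence of fourth moments.

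The equivalence (3b) $\Leftrightarrow$ (3b$'$) is the content of the technical proposition of Section \ref{sec_proofs}; since (3b) contains all the conditions of (3b$'$), only (3b$'$) $\Rightarrow$ (3b) requires work. The key is the explicit form of $g^{(n)}$ as a tensor of the orthonormal functions $g_i=\1_{A_i}/\sqrt{\lambda_i}$, together with the reproducing identity $g_i^2=g_i/\sqrt{\lambda_i}$ and the disjointness of the supports $A_i$. Using these, one expresses each off-diagonal contraction $g^{(n)}\star_r^l g^{(n)}$ (with $l<r$) and the quantity $\int_{Z^q}(g^{(n)})^4$ in terms of the coefficients $f^{(n)}$, and shows --- here $\alpha>0$ enters to bound the weights $\lambda_i^{-k/2}$ --- that they are dominated by the diagonal contractions $\|g^{(n)}\otimes_r g^{(n)}\|_{L^2}$. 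For instance one checks $\int_{Z^q}(g^{(n)})^4\leq \alpha^{-q}\sum_{\mathbf i}f^{(n)}(\mathbf i)^4\leq \alpha^{-q}\|g^{(n)}\otimes_{q-1}g^{(n)}\|_{L^2}^2$, so that the $L^4$ term is controlled by the square of a diagonal contraction norm; analogous bounds for the off-diagonal kernels complete the implication.

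Finally, the decisive and hardest step is (2) $\Rightarrow$ (3b$'$). I would first record the identity $\|g^{(n)}\otimes_r g^{(n)}\|_{L^2}=\|h^{(n)}\otimes_r h^{(n)}\|_{L^2}$, where $h^{(n)}$ is the Gaussian tensor of Example \ref{example_Gauss}; it holds because $\otimes_r=\star_r^r$ uses only the inner products $\langle g_i,g_j\rangle=\langle h_i,h_j\rangle=\delta_{ij}$. Hence (3b$'$) is exactly condition (3) of the Gaussian fourth moment theorem (Theorem \ref{thm_equiv_Gauss_copy}) applied to $Z^{(n)}=Q_q(N^{(n)},f^{(n)},{\bf G})=I^G_q(h^{(n)})$. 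I would then expand $\E[(F^{(n)})^4]$ via the product formula (\ref{product2}) (equivalently, by a combinatorial expansion over the $4q$-fold index set) and compare it with $\E[(Z^{(n)})^4]$: the two differ only through the contributions weighted by $\E[P_i^3]=\lambda_i^{-1/2}$ and $\E[P_i^4]-3=\lambda_i^{-1}$, the `even' part already accounting for the Gaussian value $3\sigma_n^4$ plus the non-negative combination $\sum_{r=1}^{q-1}c_r\|h^{(n)}\otimes_r h^{(n)}\|_{L^2}^2$. The main obstacle is the analysis of these correction terms: the $\lambda_i^{-1}$-weighted part is a positive multiple of $\int_{Z^q}(g^{(n)})^4$, but the $\lambda_i^{-1/2}$-weighted terms --- coming from coordinates repeated exactly three times --- are \emph{not} sign-definite, which is precisely why the fourth-moment expansion looks `inextricable'. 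Here I would invoke the technical proposition once more to bound these odd terms by geometric means of $\int_{Z^q}(g^{(n)})^4$ and the off-diagonal contraction norms, and thereby (by the previous paragraph, with $\alpha>0$ bounding the weights) by the diagonal contraction norms, so that they cannot cancel the non-negative Gaussian part. Combining these estimates shows that $\E[(F^{(n)})^4]\to 3\sigma^4$ forces $\|g^{(n)}\otimes_r g^{(n)}\|_{L^2}\to 0$ for every $r=1,\dots,q-1$, i.e. (3b$'$), closing the cycle of implications.
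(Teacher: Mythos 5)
Most of your architecture coincides with the paper's: the implication \textbf{(1)} $\Rightarrow$ \textbf{(2)} via uniformly bounded moments of the $P_i$ (which is where $\inf_i\lambda_i=\alpha>0$ enters) plus hypercontractivity and uniform integrability; \textbf{(3b)} $\Rightarrow$ \textbf{(1)} by the Malliavin--Stein bounds of \cite{pstu}; \textbf{(3b')} $\Rightarrow$ \textbf{(3b)} by exactly the coefficient computations you sketch (this is Proposition \ref{prop_ineq_star}); and, for $q=1$, the identity $\E[(F^{(n)})^4]-3\E[(F^{(n)})^2]^2=\sum_i f^{(n)}(i)^4/\lambda_i$. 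Your cumulant route to \textbf{(3a)} $\Rightarrow$ \textbf{(1)} is a legitimate variant of the paper's appeal to \cite[Corollary 3.4]{pstu}, though ``analogously for higher orders'' needs to be spelled out, e.g.\ via $\max_i f^{(n)}(i)^2/\lambda_i\le \alpha^{-1/2}\big(\sum_i f^{(n)}(i)^4/\lambda_i\big)^{1/2}$.

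The genuine gap is in the decisive implication \textbf{(2)} $\Rightarrow$ \textbf{(3b')}. You expand the fourth moment combinatorially, isolate an ``even'' part equal to $3\sigma_n^4$ plus a non-negative combination of contraction norms, and a sign-indefinite ``odd'' part weighted by $\E[P_i^3]=\lambda_i^{-1/2}$, which you then propose to dominate by (geometric means of) $\int_{Z^q}(g^{(n)})^4$ and contraction norms so that it ``cannot cancel'' the positive part. Two problems: first, Proposition \ref{prop_ineq_star} does not actually provide a bound on such odd configuration sums, so the claimed domination is not established; second, and more fundamentally, any bound of that type is of the same order as the positive terms you want to preserve. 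If $\E[(F^{(n)})^4]=3\sigma_n^4+\sum_p c_p\|g^{(n)}\star_p^p g^{(n)}\|_{L^2}^2+R_n$ with only $|R_n|\le C\sum_p\|g^{(n)}\star_p^p g^{(n)}\|_{L^2}^2$ (or $\le C\sum_p\|g^{(n)}\star_p^p g^{(n)}\|_{L^2}$), convergence of the fourth moment to $3\sigma^4$ does \emph{not} force the contractions to vanish unless you track constants and show $C<\min_p c_p$, which you do not do and which is not supplied by the technical proposition (whose hypotheses, moreover, assume the diagonal contractions are already small). The paper avoids this entirely: it writes $(F^{(n)})^2=\sum_{k=0}^{2q}I_k\big(G_k^{q,q}(g^{(n)},g^{(n)})\big)$ via (\ref{product2}), so that by orthogonality $\E[(F^{(n)})^4]=\sum_{k=0}^{2q}k!\,\|G_k^{q,q}(g^{(n)},g^{(n)})\|_{L^2}^2$ is a sum of non-negative terms; the $k=0$ term is $q!^2\|g^{(n)}\|_{L^2}^4$, the $k=2q$ term equals $2q!^2\|g^{(n)}\|_{L^2}^4+\sum_{p=1}^{q-1}\frac{(q!)^4}{(p!(q-p)!)^2}\|g^{(n)}\star_p^p g^{(n)}\|_{L^2}^2$ by a purely Gaussian-type identity, and all Poisson-specific, sign-problematic contributions sit in the intermediate chaoses $1\le k\le 2q-1$ and are simply discarded as non-negative. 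Hence $\E[(F^{(n)})^4]\ge 3q!^2\|g^{(n)}\|_{L^2}^4+\sum_p c_p\|g^{(n)}\star_p^p g^{(n)}\|_{L^2}^2$, and \textbf{(2)} immediately yields \textbf{(3b')}. To close your argument you would need either this orthogonality decomposition of $(F^{(n)})^2$ or a full constant-tracking analysis of the odd terms in your expansion; as stated, the step does not go through.
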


\begin{remark}{\rm The assumption $\inf\limits_{i\geq 1} \lambda_i >0$ is necessary for proving the two implications: {\bf (1)} $\Rightarrow$ {\bf (2)} and {\bf (3b')} $\Rightarrow$ {\bf (3b).}
}
\end{remark}

The next statement, that will be proved by means of Theorem \ref{thm_HS_Poisson}, establishes the universal nature of Poisson homogeneous sums of order $q\geq 2$.

\begin{theorem}[Universality of the Poisson Wiener chaos]\label{thm_universality_Poisson_1} Let the sequence  ${\bf P}$ verify the same assumptions as in Theorem {\rm \ref{thm_HS_Poisson}}. Fix $q\geq 2$,  let $\{ N^{(n)}: n\geq 1 \}$ be a sequence  of integers going to infinity, and let $\{f^{(n)} : n\geq 1\}$ be a sequence of mappings, such that each function $f^{(n)}:[N^{(n)}]^q \rightarrow \R$ is symmetric and vanishes on diagonals. Assume that $ \E[Q_q(N^{(n)},f^{(n)},{\bf P})^2] \rightarrow 1$ as $n \rightarrow \infty$. Then, the following four properties are equivalent, as $n \rightarrow \infty$.
\begin{enumerate}
  \item[\bf (1)] The sequence $\{ Q_q(N^{(n)},f^{(n)},{\bf P}) : n\geq 1\}$ converges in distribution to $Y \sim \N(0,1)$;
  \item[\bf (2)] $ \E[Q_q(N^{(n)},f^{(n)},{\bf P})^4] \rightarrow 3$;
 \item[\bf (3)]  for every sequence ${\bf X}=\{ X_i :  i\geq 1\}$ of independent  centered random variables with unit variance and such that $\sup_i\mathbb{E}|X_i|^{2+\epsilon}<\infty$, the sequence $\{ Q_q(N^{(n)},f^{(n)},{\bf X}) :  n\geq 1 \}$ converge in distribution to $Y \sim \N(0,1)$;
  \item[\bf (4)]  for every sequence ${\bf X}=\{ X_i :  i\geq 1\}$ of independent and identically distributed centered random variables with unit variance, the sequence $\{ Q_q(N^{(n)},f^{(n)},{\bf X}) :  n\geq 1 \}$ converge in distribution to $Y \sim \N(0,1)$.
\end{enumerate}
\end{theorem}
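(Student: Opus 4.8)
The plan is to funnel all four properties through a single analytic condition on the kernels, namely the vanishing of the pure contraction norms, and then to transfer the already-established Gaussian universality (Theorem \ref{t:unigauss}) to the Poisson setting, exploiting the fact that these contraction norms do not see the difference between the Gaussian and the Poisson chaos.

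First I would normalise the second moments. By the Remark following Definition \ref{d:hs}, for \emph{any} sequence ${\bf X}$ of independent centered unit-variance variables one has $\E[Q_q(N^{(n)},f^{(n)},{\bf X})^2] = q!\sum_{\mathbf{i}} f^{(n)}(\mathbf{i})^2$, a quantity depending only on $f^{(n)}$. Consequently the hypothesis $\E[Q_q(\cdot,{\bf P})^2]\to 1$ is the same as $\E[Q_q(\cdot,{\bf G})^2]\to 1$ and as $q!\|f^{(n)}\|^2\to 1$, so the variance assumptions required by Theorems \ref{thm_HS_Poisson}, \ref{thm_equiv_Gauss_copy} and \ref{t:unigauss} hold simultaneously (with $\sigma^2=1$).

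Since $q\geq 2$ and $\inf_i\lambda_i=\alpha>0$, Theorem \ref{thm_HS_Poisson} applies and shows that the Poisson conditions {\bf (1)} and {\bf (2)} are each equivalent to condition {\bf (3b')}, i.e. to $\|g^{(n)}\otimes_r g^{(n)}\|_{L^2}\to 0$ for every $r=1,\ldots,q-1$. The single genuinely computational step is then the identity
$$\|g^{(n)}\otimes_r g^{(n)}\|_{L^2(\mu^{2(q-r)})} \;=\; \|h^{(n)}\otimes_r h^{(n)}\|_{\mathfrak{H}^{\otimes 2(q-r)}},\qquad r=1,\ldots,q-1,$$
where $h^{(n)}$ is the Gaussian kernel of (\ref{e:tensorgauss}). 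Both systems $\{g_i\}\subset L^2(\mu)$ and $\{h_i\}\subset\mathfrak{H}$ are orthonormal, and the contraction $\otimes_r=\star_r^r$ is assembled purely from iterated inner products; expanding each side in the respective orthonormal system and using $\langle g_i,g_j\rangle=\langle h_i,h_j\rangle=\delta_{ij}$, both norms collapse to the \emph{same} explicit quadrilinear sum $\sum_{\mathbf{a},\mathbf{b},\mathbf{c},\mathbf{c}'} f^{(n)}(\mathbf{a},\mathbf{c})f^{(n)}(\mathbf{b},\mathbf{c})f^{(n)}(\mathbf{a},\mathbf{c}')f^{(n)}(\mathbf{b},\mathbf{c}')$, with no residual dependence on the $\lambda_i$. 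The diagonal-vanishing property of $f^{(n)}$ ensures that the admissible index configurations coincide in the two expansions, so the equality is exact rather than merely up to constants.

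Finally I would chain the equivalences. By the displayed identity, condition {\bf (3b')} for $g^{(n)}$ is equivalent to $\|h^{(n)}\otimes_r h^{(n)}\|\to 0$ for all $r=1,\ldots,q-1$, which by Theorem \ref{thm_equiv_Gauss_copy} (its condition {\bf (3)} being equivalent to {\bf (1)}) is equivalent to $Q_q(N^{(n)},f^{(n)},{\bf G})\overset{\rm law}{\longrightarrow}\N(0,1)$; and by Theorem \ref{t:unigauss} this Gaussian CLT is in turn equivalent to the present conditions {\bf (3)} and {\bf (4)}. Assembling the chain, in which Poisson {\bf (1)} $\Leftrightarrow$ Poisson {\bf (2)} $\Leftrightarrow$ {\bf (3b')} $\Leftrightarrow$ vanishing of the Gaussian contractions $\Leftrightarrow$ Gaussian CLT $\Leftrightarrow$ {\bf (3)} $\Leftrightarrow$ {\bf (4)}, establishes the equivalence of all four properties. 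The main obstacle is precisely the contraction identity above: although it ultimately reduces to orthonormality, it requires careful bookkeeping of matched versus free indices, and it is here that the structural parallel between the Gaussian and Poisson chaos is made quantitative and the Gaussian universality is imported verbatim.
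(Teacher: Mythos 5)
Your proposal is correct and follows essentially the paper's own route: Theorem \ref{thm_HS_Poisson} reduces the Poisson conditions {\bf (1)}--{\bf (2)} to the vanishing of the contraction norms $\| g^{(n)} \star_r^r g^{(n)} \|_{L^2}$, $r=1,\ldots,q-1$, which are then fed into Theorem \ref{thm_equiv_Gauss_copy} and Theorem \ref{t:unigauss} exactly as you describe. The only (harmless) difference is that the paper bypasses your quadrilinear bookkeeping by realizing ${\bf G}$ as an isonormal process over $\mathfrak{H}=L^2(\mu)$ with $h_i=g_i$, so that the Gaussian kernel is literally $h^{(n)}=g^{(n)}$ and the identity of contraction norms is automatic.
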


\begin{remark}\label{r:cex}{\rm Theorem \ref{thm_universality_Poisson_1} is false in general for $q=1$, as one can see by considering the case $N^{(n)}=n$, $\lambda_i = i$, and $f_n$ such that $f_n(n) = 1$ and $f_n(i)=0$ for $i\neq n$. On the other hand, one can prove an equivalent of Theorem \ref{thm_universality_Poisson_1} for $q=1$, by assuming in addition that $\sup_i \lambda_i <\infty$ and by applying the standard Lindberg's CLT (see e.g. \cite[Theorem 9.6.1]{Dudley book}). The details are left to the reader.}
\end{remark}

We conclude this section with a result implying that the Wasserstein distance metrizes the convergence to Gaussian for any sequence of homogeneous sums based on a Poisson field. Recall that, given random variables $X,Y\in L^1(\mathbb{P})$, the {\it Wasserstein distance} between the law of $X$ and the law of $Y$ is defined as the quantity
\[
d_W(X,Y) = \sup_{f\in {\rm Lip}(1)} \big|\E[f(X)] -\E[f(Y)]\big|,
\]
where ${\rm Lip}(1)$ indicates the class of Lipschitz real-valued function with Lipschitz constant $\leq 1$. It is well-known that the topology induced by $d_W$, on the class of probability measures on the real line, is strictly stronger than the one induced by the convergence in distribution.

\begin{proposition}
Let the sequence of homogeneous sums $\{ F^{(n)} : n\geq 1\}$ satisfy the assumptions of Theorem {\rm \ref{thm_HS_Poisson}}. If $F^{(n)}$ converges in distribution to $Y \sim \mathcal{N}(0,1)$, as $n\to\infty$, then necessarily $d_W(F^{(n)}, Y)\to 0$.
\end{proposition}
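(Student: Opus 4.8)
The plan is to exploit the fact that the implication $\mathbf{(3b)} \Rightarrow \mathbf{(1)}$ in Theorem \ref{thm_HS_Poisson} is, at bottom, \emph{quantitative}: it is obtained by producing an explicit upper bound on the Wasserstein distance $d_W(F^{(n)},Y)$ and showing that it vanishes. Thus, rather than merely deducing convergence in law from the analytic conditions, I would run the same estimate and read off the conclusion directly. To begin, since $F^{(n)} = I_q(g^{(n)})$ belongs to the fixed $q$-th Poisson Wiener chaos $C_q$, its kernel $g^{(n)}$ is a finite combination of tensor products of normalized indicators; hence $F^{(n)}$ has finite moments of all orders, and in particular $F^{(n)}, Y \in L^1(\p)$, so that $d_W(F^{(n)},Y)$ is well defined.

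The first step is to invoke the Malliavin--Stein bound for normal approximations on the Poisson space established in \cite{pstu} (and refined in \cite{peczheng}). For a centered functional $F$ with $\E[F^2]=\sigma^2$ and $Y\sim\N(0,\sigma^2)$, this bound reads, schematically,
$$ d_W(F,Y) \leq \E\big[\,|\sigma^2 - \langle DF, -DL^{-1}F\rangle |\,\big] + \int_Z \E\big[\,|D_zF|^2\,|D_zL^{-1}F|\,\big]\,\mu(dz), $$
where $D$ is the Malliavin derivative and $L^{-1}$ the pseudo-inverse of the Ornstein--Uhlenbeck generator. Specializing to $F=F^{(n)}\in C_q$, one has $-DL^{-1}F^{(n)} = \tfrac1q DF^{(n)}$, so the first term is controlled by $|\sigma^2 - \E[(F^{(n)})^2]| + \sqrt{\var(\tfrac1q\|DF^{(n)}\|^2_{L^2(\mu)})}$ (using the identity $\E[\tfrac1q\|DF^{(n)}\|^2]=\E[(F^{(n)})^2]$ together with Cauchy--Schwarz), while the second term is a third-moment quantity of the type $\tfrac1q\int_Z\E[|D_zF^{(n)}|^3]\,\mu(dz)$.

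The second step is the translation of these two Malliavin-type quantities into the contraction norms appearing in condition $\mathbf{(3b)}$. This is precisely the computation carried out in the proof of Theorem \ref{thm_HS_Poisson}: using the product formula (\ref{product2}) and the isometric property, both $\var(\tfrac1q\|DF^{(n)}\|^2)$ and the third-moment term are bounded by a finite linear combination (with constants depending only on $q$) of the quantities $\|g^{(n)}\star_r^l g^{(n)}\|_{L^2}$, for $r=1,\dots,q$ and $l=1,\dots,r\wedge(q-1)$, together with $\big(\int_{Z^q}(g^{(n)})^4\big)^{1/2}$. This is where the hypothesis $\inf_i\lambda_i=\alpha>0$ enters, controlling the passages between $L^2$ and $L^4$ on the discrete chaos. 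I expect this bookkeeping to be the main obstacle — it is not conceptual, but it does require the careful product-formula expansion underlying Theorem \ref{thm_HS_Poisson}.

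Finally, I would assemble the argument. Since $F^{(n)}\overset{\rm law}{\longrightarrow}Y\sim\N(0,1)$, the equivalence $\mathbf{(1)}\Leftrightarrow\mathbf{(3b)}$ in Theorem \ref{thm_HS_Poisson} forces condition $\mathbf{(3b)}$ to hold, i.e. $\int_{Z^q}(g^{(n)})^4\to 0$ and all the relevant contraction norms tend to $0$; moreover $\E[(F^{(n)})^2]\to 1=\sigma^2$ by assumption. Feeding these facts into the bound of the first two steps shows that every term on the right-hand side vanishes as $n\to\infty$, whence $d_W(F^{(n)},Y)\to 0$. Once the estimates of Theorem \ref{thm_HS_Poisson} are in hand, the proposition is a direct corollary, the only genuinely new observation being that the convergence it delivers is automatically in the (strictly stronger) Wasserstein topology.
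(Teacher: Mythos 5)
Your proposal is correct and takes essentially the same route as the paper: deduce conditions \textbf{(3a)}/\textbf{(3b)} from the convergence in law via Theorem \ref{thm_HS_Poisson}, and then conclude from the quantitative Malliavin--Stein bounds on the Poisson space of \cite{pstu} (Corollary 3.4 for $q=1$, Theorem 4.1 for $q\geq 2$), which the paper simply cites instead of re-deriving in terms of contractions as you sketch. The only minor point is that for $q=1$ the relevant condition is \textbf{(3a)} rather than \textbf{(3b)}, but the same estimate covers that case.
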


\begin{proof} Using Corollary 3.4 (for the case $q=1$) and Theorem 4.1 (for the case $q\geq 2$) in \cite{pstu}, we see that, if conditions {\bf (3a)}-{\bf (3b)} are verified, then $d_W(F^{(n)}, Y)\to 0$, so that the conclusion follows from Theorem \ref{thm_HS_Poisson}.
\end{proof}

\end{subsection}
\begin{subsection}{Multi-dimensional case}

We now present some multidimensional extensions of the results presented in the previous section: the proofs are similar to those of the results in the previous section, and are mostly left to the reader. Our starting point is the following multi-dimensional extension of Theorem \ref{thm_equiv_Gauss_copy}, first proved by Peccati and Tudor in \cite{ptudor}. For details and generalizations, see \cite{npr, no}.
\begin{theorem}\label{thm_equiv_Gauss_multi_copy}
Let $G$ be an isonormal Gaussian process over the Hilbert space $\mathfrak{H} = L^2(\mu)$. Fix $d\geq2$ and let $C = \{C(i, j) : i, j = 1, \ldots, d\}$ be a $d\times d$
positive definite matrix. Fix integers $1 \leq q_1 \leq \cdots \leq q_d$. For any $n \geq 1$ and $i = 1, \ldots , d$, let
$h^{(n)}_i$ belong to $L_s^2(\mu^{q_i})$. Assume that
$$F^{(n)} = (F^{(n)}_1 , \ldots , F^{(n)}_d ) := (I^G_{q_1}(h^{(n)}_1 ), \ldots , I^G_{q_d}(h^{(n)}_d )) \qquad n \geq 1, $$
is such that
$$ \lim_{n\rightarrow \infty} \E[F^{(n)}_i F^{(n)}_j ] = C(i, j), \quad 1 \leq i, j \leq d. $$
Then, as $n\rightarrow \infty$, the following four assertions are equivalent:
\begin{itemize}
  \item[\bf (1)] The vector $F^{(n)}$ converges in distribution to a $d$-dimensional Gaussian vector $\Nd(0,C)$;
  \item[\bf (2)] for every $1 \leq i \leq d$, $\E\left[ (F^{(n)}_i )^4 \right] \rightarrow 3C(i, i)^2 $;
  \item[\bf (3)] for every $1 \leq i \leq d$ and every $1 \leq r \leq q_i-1$ , $\|h^{(n)}_i \otimes_r h^{(n)}_i \|_{L^2} \rightarrow 0$;
  \item[\bf (4)] for every $1 \leq i \leq d$, $F^{(n)}_i$ converges in distribution to a centered Gaussian random
variable with variance $C(i, i)$.
\end{itemize}
\end{theorem}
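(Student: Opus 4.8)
The plan is to reduce as much as possible to the one-dimensional Theorem~\ref{thm_equiv_Gauss_copy}, and to isolate the genuinely multivariate content in the single implication $\mathbf{(4)}\Rightarrow\mathbf{(1)}$. First, observe that conditions $\mathbf{(2)}$, $\mathbf{(3)}$ and $\mathbf{(4)}$ are \emph{marginal}: each concerns only a single component $F^{(n)}_i = I^G_{q_i}(h^{(n)}_i)$. Since $C$ is positive definite, $C(i,i)>0$ for every $i$, so after rescaling $F^{(n)}_i$ by $\sqrt{C(i,i)}$ we are exactly in the normalized framework of Theorem~\ref{thm_equiv_Gauss_copy}. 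Applying that theorem for each fixed $i$ gives the chain $F^{(n)}_i \to \N(0,C(i,i)) \iff \E[(F^{(n)}_i)^4]\to 3C(i,i)^2 \iff \|h^{(n)}_i \otimes_r h^{(n)}_i\|_{L^2}\to 0$ for all $1\le r\le q_i-1$; taking the conjunction over $i=1,\dots,d$ yields $\mathbf{(2)}\iff\mathbf{(3)}\iff\mathbf{(4)}$. The implication $\mathbf{(1)}\Rightarrow\mathbf{(4)}$ is immediate, since the $i$-th marginal of $\Nd(0,C)$ is $\N(0,C(i,i))$ and convergence in law passes to coordinate projections.

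It remains to prove $\mathbf{(4)}\Rightarrow\mathbf{(1)}$, equivalently $\mathbf{(3)}\Rightarrow\mathbf{(1)}$, and this is where the real difficulty lies: a linear combination $\sum_i \lambda_i F^{(n)}_i$ of integrals of different orders does not live in a single Wiener chaos, so the univariate fourth-moment theorem cannot be applied to it directly. The first step is to upgrade the same-index contraction estimates in $\mathbf{(3)}$ to the mixed ones. For $i\ne j$ and $r$ in the admissible range I would invoke the standard Cauchy--Schwarz-type inequality
\[
\|h^{(n)}_i \otimes_r h^{(n)}_j\|_{L^2}^2 \;\le\; \|h^{(n)}_i \otimes_{q_i-r} h^{(n)}_i\|_{L^2}\,\|h^{(n)}_j \otimes_{q_j-r} h^{(n)}_j\|_{L^2},
\]
so that vanishing of every same-index contraction forces every relevant mixed contraction to vanish as well; the extreme contraction (corresponding to $r=q_i\wedge q_j$) is instead governed by the covariance assumption $\E[F^{(n)}_i F^{(n)}_j]\to C(i,j)$.

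With all same-index and mixed contractions tending to zero, I would conclude joint convergence in either of two equivalent ways. The Malliavin--Stein route bounds a smooth distance between $F^{(n)}$ and $\Nd(0,C)$ by a sum of the terms $\E\bigl|C(i,j)-\tfrac{1}{q_j}\langle DF^{(n)}_i,-DL^{-1}F^{(n)}_j\rangle_{\h}\bigr|$; expanding each inner product via the product/contraction formula shows that it is controlled by $|C(i,j)-\E[F^{(n)}_i F^{(n)}_j]|$ together with the contraction norms displayed above, all of which vanish by hypothesis. Alternatively, the method of cumulants writes every joint cumulant of $F^{(n)}$ of order $\ge 3$ as a finite sum of products of contraction norms that vanish, while the second-order cumulants converge to $C(i,j)$; hypercontractivity on a fixed chaos guarantees uniformly bounded moments, so convergence of all cumulants to those of $\Nd(0,C)$ yields convergence in distribution.

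The main obstacle is precisely this last step, the passage from marginal to joint Gaussian convergence: the univariate theorem supplies no information about the joint law, and everything hinges on the contraction inequality above, which is the mechanism by which the individual fourth-moment conditions propagate to control all cross-interactions between the components.
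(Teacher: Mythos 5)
Your proposal is sound, but note first that the paper offers no proof of this statement at all: Theorem \ref{thm_equiv_Gauss_multi_copy} is imported verbatim from Peccati and Tudor \cite{ptudor}, with \cite{npr, no} cited for details and generalizations, so there is no internal argument to compare against. Your route --- marginal equivalences $\mathbf{(2)}\Leftrightarrow\mathbf{(3)}\Leftrightarrow\mathbf{(4)}$ from the one-dimensional Theorem \ref{thm_equiv_Gauss_copy}, the trivial projection $\mathbf{(1)}\Rightarrow\mathbf{(4)}$, and then $\mathbf{(3)}\Rightarrow\mathbf{(1)}$ via the identity $\|h_i\otimes_r h_j\|_{L^2}^2=\langle h_i\otimes_{q_i-r}h_i,\,h_j\otimes_{q_j-r}h_j\rangle_{L^2}$ followed by Cauchy--Schwarz, and a multivariate Malliavin--Stein bound (or a cumulant/diagram argument with hypercontractive moment bounds) --- is exactly the modern proof found in \cite{npr} and in \cite[Chapter 6]{nourdin_peccati_book}; it differs from the original argument of \cite{ptudor}, which ran through martingale representations and Dambis--Dubins--Schwarz time changes, and it buys explicit quantitative bounds that the original method does not give. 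Two minor points deserve attention. First, the multivariate statement allows $q_i=1$, while Theorem \ref{thm_equiv_Gauss_copy} is stated only for $q\geq 2$; for a degree-one component $F^{(n)}_i=I^G_1(h^{(n)}_i)$ is exactly Gaussian, condition $\mathbf{(3)}$ is vacuous and $\mathbf{(2)}$, $\mathbf{(4)}$ hold automatically from the variance assumption, so this case must simply be treated by hand. Second, your remark that the extreme mixed contraction $r=q_i\wedge q_j$ is ``governed by the covariance assumption'' is accurate only when $q_i=q_j$ (there the contraction is the scalar $q_i!\langle h^{(n)}_i,h^{(n)}_j\rangle=\E[F^{(n)}_iF^{(n)}_j]\to C(i,j)$); when $q_i<q_j$ the covariance vanishes identically and gives no information, but your displayed inequality still settles the matter, since $\|h^{(n)}_i\otimes_{q_i}h^{(n)}_j\|^2_{L^2}\leq \|h^{(n)}_i\|^2_{L^2}\,\|h^{(n)}_j\otimes_{q_j-q_i}h^{(n)}_j\|_{L^2}\to 0$ because $1\leq q_j-q_i\leq q_j-1$. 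With these clarifications the argument is complete at the level of detail one would expect for a proof sketch of this classical result.
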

Combining the previous Theorem \ref{thm_HS_Poisson} with \cite[Theorem 5.8]{peczheng} we deduce the following analogue of Theorem \ref{thm_equiv_Gauss_multi_copy} for homogeneous sums inside the Poisson Wiener chaos.

\begin{theorem} \label{thm_HS_Poisson_multi}
Let $\{ \lambda_i:  i\geq 1\}$ be a collection of positive real numbers, and assume that $\inf\limits_i \lambda_i = \alpha >0$. Let ${\bf P}=\{ P_i : i\geq 1\}$ be a collection of independent random variables such that $\forall i$, $P_i$ verifies relation {\rm (\ref{e:pi})}. Fix integers $d\geq 1$ and $q_d\geq \cdots\geq  q_1 \geq 1$. Let $\{ N_j^{(n)}, f_j^{(n)}: j=1,\cdots,d; \, n\geq 1\}$ be such that for every fixed $j$, $\{ N_j^{(n)} : n\geq 1\}$ is a sequence of integers going to infinity, and each $f_j^{(n)}: [N_j^{(n)}]^{q_j} \rightarrow \R$ is symmetric and vanishes on diagonals. We consider a sequence of random vectors $F^{(n)} = (F_1^{(n)}, \cdots,  F_d^{(n)})$, $n\geq 1$, where for every $1\leq j \leq d$,
$$ F_j^{(n)} = Q_{q_j}(N_j^{(n)}, f_j^{(n)} , {\bf P}) = \sum_{i_1,\cdots,i_{q_j}}^{N_j^{(n)}} f_j^{(n)}(i_1,\cdots,i_{q_j}) P_{i_1} \cdots P_{i_{q_j}} = I_{q_j}(g_j^{(n)}) $$
with
$$g_j^{(n)} = \sum_{i_1,\cdots,i_{q_j}}^{N_j^{(n)}} f_j^{(n)}(i_1,\cdots,i_{q_j}) g_{i_1} \otimes \cdots \otimes g_{i_{q_j}} ,$$
and the representation of $ F_j^{(n)}$ as a multiple integral is the same as in Example {\rm \ref{example_Poisson}}. Given a $d\times d$ positive definite matrix $C=\Big( C(i,j) \Big)_{i,j = 1,\ldots,d}$, suppose that  $ \lim_{n\to\infty} \E[F_i^{(n)} F_j^{(n)}] \rightarrow C(i,j) $, for every $i,j=1,\ldots,d$. Then,
the following four statements are equivalent, as $n\to \infty$:
\begin{itemize}
  \item[\bf (1)] $F^{(n)} \overset{\rm law}{\longrightarrow} (Y_1,\ldots,Y_d)\sim \mathcal{N}_d(0,C)$, where $\mathcal{N}_d(0,C)$ indicates a $d$-dimensional Gaussian distribution with covariance matrix $C$;
  \item[\bf (2)] for each $j=1,\cdots,d$, $\E[(F_j^{(n)})^4] \rightarrow 3 C(j,j)^2$;
  \item[\bf (3)] for each $j=1,\cdots,d$ such that $q_j\geq 2$,
 $\forall r=1,\cdots,q_j-1$,
  \[ \| g_j^{(n)} \star_r^r g_j^{(n)} \|_{L^2} = \| g_j^{(n)} \otimes_r g_j^{(n)} \|_{L^2}  \rightarrow 0 ,\] and, for every $j$ such that $q_j=1$, \[\sum_{i=1}^{N^{(n)}} f_j^{(n)}(i)^4\frac{1}{\lambda_i} \rightarrow 0;\]
  \item[\bf (4)] for each $j=1,\cdots,d$, $F^{(n)}_j \overset{\rm law}{\longrightarrow} \mathcal{N}(0,C(j,j))$.
  \end{itemize}
\end{theorem}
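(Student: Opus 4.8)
The plan is to derive the four-way equivalence by combining the one-dimensional result of Theorem \ref{thm_HS_Poisson}, applied separately to each marginal $F_j^{(n)}$, with the multidimensional Malliavin--Stein bound of \cite[Theorem 5.8]{peczheng}. The first observation is that conditions (2), (3) and (4) are purely \emph{marginal}: (2) and (4) involve only the law of a single coordinate $F_j^{(n)}$, while (3) is a condition on the individual kernels $g_j^{(n)}$. Consequently, for each fixed index $j$, Theorem \ref{thm_HS_Poisson} yields, \emph{for that single coordinate}, the equivalence of the relevant fourth-moment condition, contraction condition and marginal CLT; here one uses the case $q\geq 2$ (together with the reduction of the full contraction family to the diagonal contractions $\|g_j^{(n)} \otimes_r g_j^{(n)}\|$ furnished by the equivalence of conditions (3b) and (3b')) when $q_j\geq 2$, and the case $q=1$ when $q_j=1$. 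Taking the conjunction over $j=1,\ldots,d$ gives the global equivalences (2) $\Leftrightarrow$ (3) $\Leftrightarrow$ (4).

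It remains to connect these marginal conditions with the joint convergence in (1). The implication (1) $\Rightarrow$ (4) is immediate, since convergence in law of the vector $F^{(n)}$ forces convergence of each of its coordinates. The substantial step is (3) $\Rightarrow$ (1), for which I would invoke \cite[Theorem 5.8]{peczheng}: this result bounds a smooth distance between the law of $F^{(n)}$ and $\mathcal{N}_d(0,C)$ by the sum of the covariance discrepancies $|C(i,j) - \E[F_i^{(n)} F_j^{(n)}]|$ and a finite family of contraction norms $\|g_i^{(n)} \star_r^l g_j^{(n)}\|$ (together with the fourth-power integrals $\int_{Z^{q_j}}(g_j^{(n)})^4$), ranging over all pairs $(i,j)$ and all admissible indices $(r,l)$. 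The covariance terms vanish by hypothesis; the diagonal ($i=j$) contractions and fourth-power integrals vanish by condition (3) and the implication (3b') $\Rightarrow$ (3b) of Theorem \ref{thm_HS_Poisson}. Since the distance in question metrizes convergence towards a Gaussian vector, the proof is complete once the off-diagonal terms are shown to vanish as well.

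This last point is where I expect the main obstacle. The cross contractions $\|g_i^{(n)} \star_r^l g_j^{(n)}\|$ with $i\neq j$ do not appear in condition (3) and must be dominated by the diagonal ones. The mechanism is a Cauchy--Schwarz estimate, parallel to the argument underlying the Gaussian Peccati--Tudor theorem (Theorem \ref{thm_equiv_Gauss_multi_copy}): after a suitable re-pairing of integration variables, each mixed contraction is bounded by a geometric mean of diagonal contractions of $g_i^{(n)}$ and of $g_j^{(n)}$, so that the vanishing of all diagonal contractions propagates to the mixed terms. Implementing this is more delicate than in the Gaussian case, since the Poisson product formula (\ref{product}) involves the two-parameter family $\star_r^l$ rather than a single contraction index, and one must verify that every pair $(r,l)$ occurring in the bound can be matched to admissible diagonal contractions. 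The hypothesis $\inf_i \lambda_i = \alpha > 0$ enters exactly as in the one-dimensional theorem: it is what permits the genuinely Poisson fourth-power integrals, which have no Gaussian analogue, to be absorbed into and controlled by the diagonal contraction norms.
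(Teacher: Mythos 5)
Your architecture coincides with the paper's: the theorem is proved there by exactly the combination you describe --- the equivalences {\bf (2)} $\Leftrightarrow$ {\bf (3)} $\Leftrightarrow$ {\bf (4)} come from applying Theorem \ref{thm_HS_Poisson} separately to each coordinate (case $q\geq 2$ or $q=1$ according to $q_j$), {\bf (1)} $\Rightarrow$ {\bf (4)} is projection, and {\bf (3)} $\Rightarrow$ {\bf (1)} is obtained from \cite[Theorem 5.8]{peczheng} after condition {\bf (3)} has been upgraded, via Proposition \ref{prop_ineq_star} (equivalently, the implication {\bf (3b')} $\Rightarrow$ {\bf (3b)}), to the full family $\| g_j^{(n)} \star_r^l g_j^{(n)}\|_{L^2}\to 0$ together with $\int_{Z^{q_j}} (g_j^{(n)})^4\to 0$; this is where $\inf_i\lambda_i=\alpha>0$ enters, and the technical integrability assumptions of the cited theorem are automatic for these finitely supported kernels.

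The one point where your account diverges is your reading of \cite[Theorem 5.8]{peczheng} and the ensuing ``main obstacle''. That theorem is the Poisson analogue of the Peccati--Tudor theorem: its hypotheses are purely componentwise (covariance convergence plus, for each $j$, the vanishing of the contractions of $g_j^{(n)}$ with itself and of the fourth-power integral), and no cross contractions $\| g_i^{(n)} \star_r^l g_j^{(n)}\|$ with $i\neq j$ have to be verified by the user --- the Cauchy--Schwarz domination of mixed terms by diagonal ones is carried out inside the proof of that theorem. So the step you single out as delicate, and leave unexecuted, is not actually part of the argument at the level of this paper; this is precisely why the citation closes the proof so quickly. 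If instead you were to start from the explicit multidimensional Malliavin--Stein bounds of \cite{peczheng}, which do contain cross terms, then your proposal as written has a gap: the domination of $\| g_i^{(n)} \star_r^l g_j^{(n)}\|$ by diagonal quantities is only asserted, not proved, and it is exactly the nontrivial content that Theorem 5.8 packages for you.
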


Finally, we present a multi-dimensional analogous of the universality statement contained in Theorem \ref{thm_universality_Poisson_1}: it is deduced by combining Theorems \ref{thm_equiv_Gauss_multi_copy} and \ref{thm_HS_Poisson_multi} above with \cite[Theorem 7.1]{npr2}

\begin{theorem}[Multi-dimensional Universality] \label{thm_universality_Poisson_d}
Let the assumptions and notations of Theorem {\rm \ref{thm_HS_Poisson_multi}} prevail. Then, the following two assertions are equivalent as $n\to\infty$:

\begin{enumerate}
  \item[\bf (1)] The sequence $\{ F^{(n)} : n\geq 1\}$ converges in distribution to $(Y_1,\ldots,Y_d) \sim \N_d(0,1)$;
 \item[\bf (2)]  for every sequence ${\bf X}=\{ X_i :  i\geq 1\}$ of independent  centered random variables with unit variance and such that $\sup_i\mathbb{E}|X_i|^{3}<\infty$, the sequence of $d$-dimensional vectors
 \[
 \{ Q_q(N_j^{(n)},f_j^{(n)},{\bf X}): j=1,\ldots,d \}, \quad n\geq 1,
 \]
 converges in distribution to $(Y_1,\ldots Y_d)$.
\end{enumerate}

\end{theorem}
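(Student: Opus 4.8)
The plan is to establish the two implications separately: the direction \textbf{(2)} $\Rightarrow$ \textbf{(1)} is essentially immediate, while \textbf{(1)} $\Rightarrow$ \textbf{(2)} carries all the content and is routed through the associated Gaussian homogeneous sums. For \textbf{(2)} $\Rightarrow$ \textbf{(1)}, the idea is simply to specialize \textbf{(2)} to the admissible choice $\mathbf{X}=\mathbf{P}$. Each $P_i$ is independent, centered and of unit variance, so it only remains to verify the moment bound $\sup_i\E|P_i|^3<\infty$. Writing $\E|P_i|^3=\lambda_i^{-3/2}\E|P(\lambda_i)-\lambda_i|^3$ and bounding the third absolute central moment by the fourth through Lyapunov's inequality, $\E|P(\lambda_i)-\lambda_i|^3\leq(\E(P(\lambda_i)-\lambda_i)^4)^{3/4}=(\lambda_i+3\lambda_i^2)^{3/4}$, one obtains $\E|P_i|^3\leq(\lambda_i^{-1}+3)^{3/4}\leq(\alpha^{-1}+3)^{3/4}$, where the last step uses the standing assumption $\inf_i\lambda_i=\alpha>0$. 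Hence $\mathbf{P}$ is an admissible sequence and \textbf{(2)} forces $F^{(n)}$ to converge to $\mathcal{N}_d(0,C)$, which is \textbf{(1)}.

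For the substantial direction \textbf{(1)} $\Rightarrow$ \textbf{(2)}, the strategy is to transfer the Poisson convergence to the Gaussian homogeneous sums built from the \emph{same} coefficients $f_j^{(n)}$, and then to invoke the Gaussian multidimensional universality of \cite[Theorem 7.1]{npr2}. First, assuming \textbf{(1)}, the equivalence \textbf{(1)} $\Leftrightarrow$ \textbf{(3)} in Theorem \ref{thm_HS_Poisson_multi} yields $\|g_j^{(n)}\otimes_r g_j^{(n)}\|_{L^2}\to 0$ for every $j$ with $q_j\geq 2$ and every $r=1,\ldots,q_j-1$. Next, let $h_j^{(n)}$ be the Gaussian kernels defined from the same $f_j^{(n)}$ as in (\ref{e:tensorgauss}). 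The key observation is that the contraction $\otimes_r=\star_r^r$ of a tensor built over an orthonormal system is an explicit algebraic expression in the coefficients $f_j^{(n)}$ and in the pairwise inner products of the basis functions only; since $\langle g_i,g_{i'}\rangle_{L^2(\mu)}=\delta_{ii'}=\langle h_i,h_{i'}\rangle_{\mathfrak{H}}$, one has the exact identity $\|h_j^{(n)}\otimes_r h_j^{(n)}\|_{L^2}=\|g_j^{(n)}\otimes_r g_j^{(n)}\|_{L^2}$, and likewise the limiting covariance of the Gaussian vector equals $C$.

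Therefore the contraction conditions transfer verbatim, and the implication \textbf{(3)} $\Rightarrow$ \textbf{(1)} of Theorem \ref{thm_equiv_Gauss_multi_copy} shows that the Gaussian vector $\{I^G_{q_j}(h_j^{(n)})\}_j=\{Q_{q_j}(N_j^{(n)},f_j^{(n)},\mathbf{G})\}_j$ converges to $\mathcal{N}_d(0,C)$. Feeding this Gaussian convergence into \cite[Theorem 7.1]{npr2} then yields convergence of $\{Q_{q_j}(N_j^{(n)},f_j^{(n)},\mathbf{X})\}_j$ to the same limit for every admissible $\mathbf{X}$ with $\sup_i\E|X_i|^3<\infty$, which is exactly \textbf{(2)}.

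The delicate point, and what I expect to be the main obstacle, concerns the coordinates of order $q_j=1$. For these the Gaussian sum $\sum_i f_j^{(n)}(i)G_i$ is automatically normal, the contraction conditions are vacuous, and single-chaos universality genuinely fails: as recalled in Remark \ref{r:cex}, a first-chaos Poisson sum may converge to a Gaussian while the corresponding Rademacher sum does not. The transfer argument above is thus clean only when all $q_j\geq 2$; for indices with $q_j=1$ one must supplement the hypotheses (for instance with $\sup_i\lambda_i<\infty$) and replace the chaotic argument by a direct Lindeberg-type verification, exactly as in the one-dimensional discussion following Remark \ref{r:cex}. Once the order-one coordinates are controlled in this way, their joint convergence with the higher-order coordinates is governed by the covariance structure already encoded in $C$, completing the argument.
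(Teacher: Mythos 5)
Your proposal follows essentially the same route as the paper, whose proof is the one-line observation that the statement follows by combining Theorem \ref{thm_equiv_Gauss_multi_copy}, Theorem \ref{thm_HS_Poisson_multi} and \cite[Theorem 7.1]{npr2}; your transfer of the contraction conditions to the Gaussian kernels $h_j^{(n)}$ (via orthonormality of the $g_i$) and the specialization ${\bf X}={\bf P}$ with the uniform third-moment check are exactly the details the paper leaves to the reader. Your caveat about coordinates with $q_j=1$ is well taken rather than a gap: the paper's blanket appeal to \cite[Theorem 7.1]{npr2} implicitly concerns orders $q_j\geq 2$ (consistently with Remark \ref{r:cex}), so the supplementary hypothesis and Lindeberg-type argument you indicate for order-one components are indeed what would be needed to cover that case.
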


%\begin{remark}
%\rm
%As a consequence of Theorem \ref{thm_universality_Poisson_d} and Theorem 7.5 in \cite{npr2}, the vectors of homogeneous sums inside the Poisson Wiener chaos are \emph{universal} in the following sense: if, for sequence of vectors $F^{(n)} = (F_1^{(n)}, \cdots,  F_d^{(n)})$, one has:
%$$ F^{(n)} \overset{law}{\longrightarrow} \N(0,C), $$
%as $n \rightarrow \infty$, where for every $1\leq j \leq d$,
%$$ F_j^{(n)} = \mathcal{Q}_{q_j}(N_j^{(n)}, f_j^{(n)} , \textbf{P}) = \sum_{i_1,\cdots,i_{q_j}}^{N_j^{(n)}} f_j^{(n)}(i_1,\cdots,i_{q_j}) P_{i_1} \cdots P_{i_{q_j}} , $$
%and $\{\Big( F_j^{(n)} \Big)^4: n=1,2,\cdots\}$ is uniformly integrable for each fixed $j=1,\cdots,d$, then:
%\begin{itemize}
%  \item For every sequence $\textbf{\~{X}}=\{ \tilde{X}_i; i\geq 1\}$ of independent centered random variables with  variance $\var[\tilde{X}_i]= \lambda_i $ and such that $\sup_i \E|\tilde{X}_i|^{3} < \infty$,  we have that the sequence $ A^{(n)} = (A_1^{(n)},\cdots, A_d^{(n)})$ converges in law to  $\mathcal{N}(0,C)$, as $n\rightarrow \infty$. Here, for each $j=1, \ldots, d$, $A_j^{(n)} = \mathcal{Q}_{q_j}(N_j^{(n)},f_j^{(n)},\textbf{\~{X}})$ .
%\end{itemize}
%\end{remark}
\end{subsection}
\end{section}

\begin{section}{Proofs} \label{sec_proofs}
\begin{subsection}{A technical result}
%We recall the ideas appearing in Example \ref{example_Poisson}.
%\rm Let $\textbf{P} = \{ P_i : i\geq 1 \}$ be defined as above, without loss of generality, we set for every $i \geq 1$, $P_i=I^{\hat{\eta}}_1(g_i) $, where $\hat{\eta}$  indicates a  compensated Poisson measure on $(Z,\mathcal{Z}) $ with  control $\mu$, and $\textbf{g}=\{  g_i: i\geq 1\}$ is a collection of indicator functions in $L^2(\mu)$ with disjoint finite supports. Precisely, we  take  $g_i = \1_{A_i} $ with pairwise disjoint sets  $A_i$ on $Z$, such that $\mu(A_i) = \| g_{i} \|^2_{L^2} = \lambda_i$ . Of course,for every integer $ p\geq 2$, we also know that $ \| g_{i} \|^p_{L^p} = \lambda_i$. As in the statements of Theorem \ref{thm_HS_Poisson} and Theorem \ref{thm_HS_Poisson_multi}, we assume that
%$$\inf\limits_i \lambda_i = \eta >0.$$
%We fix $q\geq 2$. Now  we consider a sequence $\{F^{(n)}; n\geq 1\}$ such that
%$$ F^{(n)} = \mathcal{Q}_q(N^{(n)},f^{(n)}, \textbf{P}) =\sum_{i_1,\cdots,i_q}^{N^{(n)}} f^{(n)}(i_1,\cdots,i_q) I_1(g_{i_1}) \cdots I_1(g_{i_q}) ,$$
%where $\{ N_j^{(n)}, f_j^{(n)}: j=1,\cdots,d, n\geq 1\}$ is such that for every fixed $j$, $\{ N_j^{(n)}; n\geq 1\}$ is a sequence of integers going to infinity, and each $f_j^{(n)}: [N_j^{(n)}]^{q_j} \rightarrow \R$ is symmetric and vanishes on diagonals.
%Thanks to the above discussion, each $ F^{(n)}$ can be written as:
%$$ F^{(n)} = I_q(h^{(n)}), \qquad h^{(n)} = \sum_{i_1,\cdots,i_q}^{N^{(n)}}   f^{(n)}(i_1,\cdots,i_q) g_{i_1} \otimes \cdots \otimes g_{i_q}.$$

The following technical statement is the key to our main results.

\begin{proposition} \label{prop_ineq_star}
Let the notation and assumptions of Theorem {\rm \ref{thm_HS_Poisson}} prevail, and fix $q\geq 2$. If $\forall p=1, 2, \ldots, q-1$, one has $\lim_{n\to\infty} \| g^{(n)} \star_p^p g^{(n)} \|_{L^2} = 0$, then, as $n\rightarrow \infty$:
\begin{itemize}
  \item[\bf (a)] $\int_{Z^q} \big(g^{(n)} \big)^4 \rightarrow 0$ ;
  \item[\bf (b)] $\forall r=1,\cdots,q$, $\forall l=1,\cdots,r\wedge (q-1)$,
      $ \| g^{(n)} \star_r^l g^{(n)} \|_{L^2} \rightarrow 0 $.
\end{itemize}
\end{proposition}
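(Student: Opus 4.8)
The plan is to reduce everything to the special ``block'' structure of the kernels $g^{(n)}$ and then to exploit a simple monotonicity (diagonal-restriction) argument. First I would record the elementary consequences of the representation $g_i = \1_{A_i}/\sqrt{\lambda_i}$ with the $A_i$ pairwise disjoint and $\mu(A_i)=\lambda_i$: namely $\int_Z g_i g_j\, d\mu = \1_{(i=j)}$, while $g_i(z)g_j(z)=0$ for $i\neq j$ and $g_i(z)^2 = \1_{A_i}(z)/\lambda_i$. Because the $A_i$ are disjoint, $g^{(n)}$ is constant on each product block $A_{i_1}\times\cdots\times A_{i_q}$, with value $f^{(n)}(i_1,\ldots,i_q)/\sqrt{\lambda_{i_1}\cdots\lambda_{i_q}}$. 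From here a direct (if slightly tedious) computation yields closed forms for all the relevant quantities in terms of $f^{(n)}$ alone. I expect this bookkeeping---keeping track of which of the $q$ arguments are shared-and-integrated, shared-and-free, or genuinely free in each copy, and noting that disjoint supports kill all cross terms upon squaring---to be the main technical obstacle; the rest is soft.

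Concretely, I would prove the two identities
\[
\int_{Z^q}\big(g^{(n)}\big)^4 = \sum_{i_1,\ldots,i_q}^{N^{(n)}} \frac{f^{(n)}(i_1,\ldots,i_q)^4}{\lambda_{i_1}\cdots\lambda_{i_q}},
\]
and, for $1\le l\le r\le q$ with $l\le q-1$, writing $\mathbf a$ for the $l$ integrated indices, $\mathbf e$ for the $r-l$ shared free indices, and $\mathbf c,\mathbf d$ for the remaining $q-r$ free indices of the two copies,
\[
\big\|g^{(n)}\star_r^l g^{(n)}\big\|_{L^2}^2 = \sum_{\mathbf e,\mathbf c,\mathbf d}\frac{1}{\lambda_{e_1}\cdots\lambda_{e_{r-l}}}\Big(\sum_{\mathbf a} f^{(n)}(\mathbf a,\mathbf e,\mathbf c)\,f^{(n)}(\mathbf a,\mathbf e,\mathbf d)\Big)^2 .
\]
The crucial feature, special to the indicator structure, is that a shared \emph{free} variable forces the two copies to carry the same index there (disjoint supports), producing the harmless weights $1/\lambda_{e_m}$; these are $\le 1/\alpha$ precisely because $\inf_i\lambda_i=\alpha>0$. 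In particular the diagonal contraction has the clean, $\lambda$-free form $\|g^{(n)}\star_p^p g^{(n)}\|_{L^2}^2 = \sum_{\mathbf c,\mathbf d}(\sum_{\mathbf a} f^{(n)}(\mathbf a,\mathbf c)f^{(n)}(\mathbf a,\mathbf d))^2$, with $|\mathbf a|=p$ and $|\mathbf c|=|\mathbf d|=q-p$.

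For part {\bf (b)} with $l<r$ I would then argue by diagonal restriction. Bounding $1/(\lambda_{e_1}\cdots\lambda_{e_{r-l}})\le \alpha^{-(r-l)}$ gives
\[
\big\|g^{(n)}\star_r^l g^{(n)}\big\|_{L^2}^2 \le \alpha^{-(r-l)}\sum_{\mathbf e,\mathbf c,\mathbf d}\Big(\sum_{\mathbf a} f^{(n)}(\mathbf a,\mathbf e,\mathbf c)f^{(n)}(\mathbf a,\mathbf e,\mathbf d)\Big)^2 .
\]
Now the right-hand sum is exactly the sub-sum of $\|g^{(n)}\star_l^l g^{(n)}\|_{L^2}^2 = \sum_{\mathbf e,\mathbf e',\mathbf c,\mathbf d}(\sum_{\mathbf a}f^{(n)}(\mathbf a,\mathbf e,\mathbf c)f^{(n)}(\mathbf a,\mathbf e',\mathbf d))^2$ obtained by restricting to $\mathbf e'=\mathbf e$; since every summand is a square, hence nonnegative, this sub-sum is $\le \|g^{(n)}\star_l^l g^{(n)}\|_{L^2}^2$. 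As $1\le l\le q-1$, the hypothesis gives $\|g^{(n)}\star_l^l g^{(n)}\|_{L^2}\to 0$, whence $\|g^{(n)}\star_r^l g^{(n)}\|_{L^2}\to 0$; for the remaining diagonal cases $l=r\le q-1$ the conclusion is the hypothesis itself.

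For part {\bf (a)} the same mechanism applies. Using $\lambda_{i_1}\cdots\lambda_{i_q}\ge\alpha^q$, the first identity gives $\int_{Z^q}(g^{(n)})^4\le\alpha^{-q}\sum_{\mathbf i}f^{(n)}(\mathbf i)^4$. Taking $p=q-1$ in the clean diagonal formula (so that $\mathbf c,\mathbf d$ are single indices $c,d$) and restricting $d=c$ shows $\sum_{\mathbf i}f^{(n)}(\mathbf i)^4\le \sum_c(\sum_{\mathbf a}f^{(n)}(\mathbf a,c)^2)^2\le\|g^{(n)}\star_{q-1}^{q-1}g^{(n)}\|_{L^2}^2$, where the first inequality keeps only the nonnegative terms with matching integrated index and the second is again the diagonal restriction. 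Since $q-1\ge 1$, the hypothesis forces the right-hand side to $0$, giving $\int_{Z^q}(g^{(n)})^4\to 0$. Throughout, the standing assumption $\alpha>0$ is exactly what lets the extra $1/\lambda$ weights be absorbed into constants, and I would flag this as the single place where it is essential.
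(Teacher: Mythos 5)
Your proposal is correct and follows essentially the same route as the paper's own proof: the same explicit block computation of $\int (g^{(n)})^4$ and of $\| g^{(n)} \star_r^l g^{(n)} \|_{L^2}^2$ in terms of $f^{(n)}$ (with the $\lambda^{-1}$ weights on the shared free variables absorbed via $\alpha>0$), followed by the diagonal-restriction comparison with $\| g^{(n)} \star_l^l g^{(n)} \|_{L^2}^2$ and, for part {\bf (a)}, with $\| g^{(n)} \star_{q-1}^{q-1} g^{(n)} \|_{L^2}^2$. The only cosmetic difference is that you treat the cases $r=q$ and $r<q$ in one unified formula, where the paper separates them into steps {\bf (b1)} and {\bf (b2)}.
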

\begin{proof}
In the following proof, we shall write $\sum_{i_1,\cdots,i_p}$ to indicate $\sum_{i_1,\cdots,i_p}^{N^{(n)}}$. \\
For $p=1, 2, \ldots, q-1$,
\begin{eqnarray*}
  g^{(n)} \star_p^p g^{(n)}
 &=& \sum_{i_1,\cdots,i_q}
 \sum_{j_1,\cdots,j_q} f^{(n)}(i_1,\cdots,i_q) f^{(n)}(j_1,\cdots,j_q) \\
 & & \qquad \times( g_{i_1} \otimes \cdots \otimes g_{i_q} )
 \star_p^p ( g_{j_1} \otimes \cdots \otimes g_{j_q} ) \\
 &=& \sum_{a_1,\cdots,a_p}  \Big(
     \sum_{i_1,\cdots,i_{q-p}} \sum_{j_1,\cdots,j_{q-p}}  \prod_{l=1}^{p} \| g_{a_l} \|^2_{L^2}\times f^{(n)}(a_1,\cdots,a_p, i_1,\cdots,i_{q-p}) \\
 & & \qquad \times f^{(n)}(a_1,\cdots,a_p, j_1,\cdots,j_{q-p})
     \, g_{i_1} \otimes \cdots \otimes g_{i_{q-p}} \otimes
     g_{j_1} \otimes \cdots \otimes g_{j_{q-p}}
\Big) \\
 &=& \sum_{k_1,\cdots,k_{2q-2p}} \sum_{a_1,\cdots,a_p}  f^{(n)}(a_1,\cdots,a_p, k_1,\cdots,k_{q-p})
f^{(n)}(a_1,\cdots,a_p, k_{q-p+1},\cdots,k_{2q-2p}) \\
 & & \qquad \times \prod_{l=1}^{p} \| g_{a_l} \|^2_{L^2}\times  g_{k_1} \otimes \cdots \otimes g_{k_{2q-2p}} \\
 &=& \sum_{k_1,\cdots,k_{2q-2p}} \sum_{a_1,\cdots,a_p}  f^{(n)}(a_1,\cdots,a_p, k_1,\cdots,k_{q-p})
f^{(n)}(a_1,\cdots,a_p, k_{q-p+1},\cdots,k_{2q-2p}) \\
 & & \qquad \quad \times  g_{k_1} \otimes \cdots \otimes g_{k_{2q-2p}} \quad ,
\end{eqnarray*}
from which we deduce that
\begin{eqnarray}
 \| g^{(n)} \star_p^p g^{(n)} \|^2_{L^2}
\!&=&\! \sum_{k_1,\cdots,k_{2q-2p}} \Big( \sum_{ a_1,\cdots,a_p}
 f^{(n)}(a_1,\cdots,a_p, k_1,\cdots,k_{q-p}) \nonumber \\
 & &\quad\quad \qquad \times f^{(n)}(a_1,\cdots,a_p, k_{q-p+1},\cdots,k_{2q-2p})
\Big)^2.  \label{norm_L2_h_n_star_p}
\end{eqnarray}

\noindent We first prove {\bf (a)}. Using the definition of the functions $\{g_i : i\geq 1\}$,
\begin{eqnarray*}
\big(g^{(n)} \big)^4 &=& \sum_{i_1,\cdots,i_q} \sum_{j_1,\cdots,j_q}
 \sum_{k_1,\cdots,k_q} \sum_{s_1,\cdots,s_q}
 f^{(n)}(i_1,\cdots,i_q) f^{(n)}(j_1,\cdots,j_q) f^{(n)}(k_1,\cdots,k_q) f^{(n)}(s_1,\cdots,s_q) \\
 & & \qquad \times (g_{i_1} \otimes \cdots \otimes g_{i_q} ) \times ( g_{j_1} \otimes \cdots \otimes g_{j_q} )
   \times ( g_{k_1} \otimes \cdots \otimes g_{k_q} ) \times ( g_{s_1} \otimes \cdots \otimes g_{s_q} ) \\
   &=& \sum_{i_1,\cdots,i_q}  \big(f^{(n)} \big)^4(i_1,\cdots,i_q)\,  g_{i_1} \otimes \cdots \otimes g_{i_q} \times \prod_{l=1}^q \frac{1}{\lambda^{3/2}_{i_l}},
\end{eqnarray*}
yielding
\begin{eqnarray*}
&& \int \big(g^{(n)} \big)^4 d\mu^q = \sum_{i_1,\cdots,i_q}  \big( f^{(n)} \big)^4(i_1,\cdots,i_q) \prod_{l=1}^{q}  \frac{1}{\lambda_{i_l}}
  \leq  \frac{1}{\alpha^q}  \sum_{i_1,\cdots,i_q}  \big(f^{(n)} \big)^4(i_1,\cdots,i_q).
\end{eqnarray*}
Now, specializing formula (\ref{norm_L2_h_n_star_p}) to the case $p=q-1$, we deduce
\begin{eqnarray*}
\| g^{(n)} \star_{q-1}^{q-1} g^{(n)} \|_{L^2}^2 &=& \sum_{k_1, k_2} \Big( \sum_{ a_1,\cdots,a_{q-1}}
 f^{(n)}(a_1,\cdots,a_{q-1},k_1) \\
 & & \qquad \times f^{(n)}(a_1,\cdots,a_{q-1},k_2) \Big)^2  \\
&\geq& \sum_{k} \Big( \sum_{a_1,\cdots,a_{q-1}}
 \big(f^{(n)} \big)^2(a_1,\cdots,a_{q-1},k) \Big)^2 \\
&=& \!\!\!\sum_{ a_1,\cdots,a_{q-1}} \sum_{ b_1,\cdots,b_{q-1}}\Big(  \sum_{k} \big(f^{(n)} \big)^2(a_1,\cdots,a_{q-1},k) \big(f^{(n)} \big)^2(b_1,\cdots,b_{q-1},k) \Big) \\
&\geq& \sum_{ a_1,\cdots,a_{q-1}} \big(f^{(n)} \big)^4(a_1,\cdots,a_q) \\
&\geq& \int \Big( g^{(n)} \Big)^4 d \mu^q \times \alpha^q ,
\end{eqnarray*}
which proves {\bf (a)}, since $\alpha= \inf\limits_i \lambda_i  >0$ by assumption.\\

\noindent The proof of {\bf (b)} consists of two steps.\\
\noindent {\bf (b1)} Let $r=q$. For any $l\in \{ 1,\cdots,q-1 \}$, we have,
\begin{eqnarray*}
& & g^{(n)} \star_q^l g^{(n)} \\
&=& \sum_{i_1,\cdots,i_q} \sum_{j_1,\cdots,j_q} f^{(n)}(i_1,\cdots,i_q)
f^{(n)}(j_1,\cdots,j_q) [g_{i_1} \otimes \cdots \otimes g_{i_q}] \star_r^l [g_{j_1} \otimes \cdots \otimes g_{j_q}] \\
&=& \sum_{a_1,\cdots,a_l}     \sum_{b_1,\cdots,b_{q-l}} g_{b_1} \otimes \cdots \otimes g_{b_{q-l}}
\times f^2(a_1,\cdots,a_l,b_1,\cdots,b_{q-l}) \prod_{t=1}^{q-l} \lambda_{b_t}^{-1/2} \\
&=& \sum_{b_1,\cdots,b_{q-l}} g_{b_1} \otimes \cdots \otimes g_{b_{q-l}}\prod_{t=1}^{q-l} \lambda_{b_t}^{-1/2}\, \Big( \sum_{a_1,\cdots,a_l}  f^2(a_1,\cdots,a_l,b_1,\cdots,b_{q-l})
\Big).
\end{eqnarray*}
These equalities lead to the estimate
\begin{eqnarray*}
 \| g^{(n)} \star_q^l g^{(n)} \|^2_{L^2}
&=& \sum_{b_1,\cdots,b_{q-l}} \prod_{t=1}^{q-l} \lambda^{-1}_{b_t}
\Big( \sum_{a_1,\cdots,a_l} f^2(a_1,\cdots,a_l,b_1,\cdots,b_{q-l})
\Big)^2 \\
&\leq& \frac{1}{\alpha^{q-l}} \| g^{(n)} \star_l^l g^{(n)} \|_{L^2}^2 ,
\end{eqnarray*}
yielding (since $\alpha>0$) that $\| g^{(n)} \star_l^l g^{(n)} \|_{L^2}\to0$ implies $\| g^{(n)} \star_q^l g^{(n)} \|_{L^2}\to 0$. \\

\noindent {\bf (b2)} For any $r=1,\cdots, q-1$, and $l=1,\cdots,r$, we see that
\begin{eqnarray*}
& &  g^{(n)} \star_r^l g^{(n)}  \\
&=& \sum_{a_1,\cdots,a_l}
\left[ \sum_{b_1,\cdots,b_{r-l}} \prod_{u=1}^{r-l} \lambda^{-1/2}_{b_u} g_{b_1} \otimes \cdots \otimes g_{b_{r-l}} \right]
\sum_{i_1,\cdots,i_{q-r}} \sum_{j_1,\cdots,j_{q-r}} g_{i_1} \otimes \cdots \otimes g_{i_{q-r}} \otimes g_{j_1} \otimes \cdots \otimes g_{j_{q-r}} \\
& & \qquad \times f(a_1,\cdots,a_l, b_1,\cdots,b_{r-l}, i_1,\cdots,i_{q-r} )
 f(a_1,\cdots,a_l, b_1,\cdots,b_{r-l}, j_1,\cdots,j_{q-r} ) \\
&=& \sum_{b_1,\cdots,b_{r-l}} \sum_{i_1,\cdots,i_{q-r}} \sum_{j_1,\cdots,j_{q-r}}\prod_{u=1}^{r-l} \lambda^{-1/2}_{b_u} g_{b_1} \otimes \cdots \otimes g_{b_{r-l}} \otimes g_{i_1} \otimes \cdots \otimes g_{i_{q-r}} \otimes g_{j_1} \otimes \cdots \otimes g_{j_{q-r}} \\
& & \qquad \times \sum_{a_1,\cdots,a_l}  f(a_1,\cdots,a_l, b_1,\cdots,b_{r-l}, i_1,\cdots,i_{q-r} )
 f(a_1,\cdots,a_l, b_1,\cdots,b_{r-l}, j_1,\cdots,j_{q-r} ) .
\end{eqnarray*}
Consequently,
\begin{eqnarray*}
& &\| g^{(n)} \star_r^l g^{(n)} \|_{L^2}^2
= \sum_{b_1,\cdots,b_{r-l}} \sum_{i_1,\cdots,i_{q-r}} \sum_{j_1,\cdots,j_{q-r}} \prod_{u=1}^{r-l} \lambda^{-1}_{b_u}\\
& & \quad \times \left[ \sum_{a_1,\cdots,a_l}  f(a_1,\cdots,a_l, b_1,\cdots,b_{r-l}, i_1,\cdots,i_{q-r} )
 f(a_1,\cdots,a_l, b_1,\cdots,b_{r-l}, j_1,\cdots,j_{q-r} )  \right]^2 \\
& & \leq \frac{1}{\alpha^{r-l}} \| g^{(n)} \star_l^l g^{(n)} \|^2_{L^2} .
\end{eqnarray*}
Since $\alpha>0$, this relation yields the desired implication: if $\| g^{(n)} \star_l^l g^{(n)} \|_{L^2}\to0$, then $\| g^{(n)} \star_r^l g^{(n)} \|_{L^2}\to 0$.
\end{proof} \\
\end{subsection}

\begin{subsection}{Proofs of the main results}

\begin{proof}[Proof of Theorem \ref{thm_HS_Poisson}]

\medskip

We shall first prove the implication {\bf (1)} $\Rightarrow$ {\bf (2)} for a general $q\geq 1$. For every $\lambda >0$, let $P(\lambda)$ be a Poisson random variable with parameter $\lambda$. For every integer $k\geq 0$, we introduce the mapping
\[
\lambda \mapsto \widetilde{T}_k(\lambda) = \E[(P(\lambda) -\lambda)^k], \quad \lambda >0,
\]
so that, for instance, $\widetilde{T}_0(\lambda) =1$ and $\widetilde{T}_1(\lambda) =0$. It is well-known (see e.g. \cite[Proposition 3.3.4]{pt_book}) that the following recursive relation takes place: for every $k\geq 1$,
\[
\widetilde{T}_{k+1}(\lambda) = \lambda\sum_{j=0}^{k-1}\binom{k}{j}\widetilde{T}_j(\lambda).
\]
Elementary considerations now yield that, for every $k\geq 1$, the mapping $\widetilde{T}_k(\cdot)$ is a polynomial of degree $(k-1)/2$ if $k$ is odd, and of degree $k/2$ is $k$ is even. As a consequence, for every real $q\geq 1$ the mapping
\[
\lambda \mapsto   \frac{\E[|P(\lambda) -\lambda|^q]}{\lambda^{q/2}}
\]
is bounded on the set $[\alpha, \infty)$. Using (\ref{e:pi}) together with the assumption $\alpha = \inf_{i}\lambda_ i>0$, we infer that $\sup_{i\geq 1} \E[|P_i|^q] <\infty$ for every $q\geq 1$. Standard hypercontractivity estimates (see for instance \cite[Lemma 4.2]{npr2}) yield therefore that, since $\E[(F^{(n)})^2] \to \sigma^2$, then $\sup_{n\geq 1} \E[|F^{(n)}|^q] <\infty$, for every $q\geq1$.  As a consequence, if {\bf (1)} is in order, then necessarily $\E[(F^{(n)})^k] \to \E[Y^k]$ for every integer $k\geq 1$; in particular, {\bf (2)} is verified.

\medskip

\noindent Now assume that $q=1$ and {\bf (2)} is verified. A quick computation reveals that
\[
\E[(F^{(n)})^4]  - 3\E[(F^{(n)})^2]^2 =\|g^{(n)}\|^4_{L^4} =  \sum_{i=1}^{N^{(n)}} f_n(i)^4\frac{1}{\lambda_i} ,
\]
thus yielding the implication {\bf (2)} $\Rightarrow$ {\bf (3a)}. On the other hand, if {\bf (3a)} is verified, then one has that (by the Cauchy-Schwarz inequality)
\[
\|g^{(n)}\|^3_{L^3} \leq \|g^{(n)}\|_{L^2}\|g^{(n)}\|^{1/2}_{L^4} \to 0, \quad \text{ as }\,\, n\to\infty,
\]
so that the implication {\bf (3a)} $\Rightarrow$ {\bf (1)} follows from \cite[Corollary 3.4]{pstu}, thus concluding the proof for $q=1$.

\medskip

\noindent We now fix $q\geq 2$.
The implication {\bf (3b)} $\Rightarrow$ {\bf (1)} is a direct consequence of \cite[Theorem 5.1]{pstu}, whereas the equivalence between {\bf (3b)} and {\bf (3b')} follows from Proposition \ref{prop_ineq_star}. In view of the first part of the proof, we need only to show that {\bf (2)} $\Rightarrow$ {\bf (3b')}.
We start by exploiting formula (\ref{product2}) in order to write the chaotic decomposition of $I_q(h^{(n)})$, namely:
$$ I_q(g^{(n)})^2 = \sum_{k=0}^{2q} I_k\big( G_k^{q,q}(g^{(n)},g^{(n)}) \big) .$$
As a consequence, by exploiting the orthogonality of multiple integrals with different orders,
\begin{eqnarray}
\E[I_q(g^{(n)})^4] &=& \sum_{k=0}^{2q} k! \| G_k^{q,q}(g^{(n)},g^{(n)}) \|_{L^2}^2 \nonumber\\
 &=& \| G_0^{q,q}(g^{(n)},g^{(n)}) \|_{L^2}^2 + (2q)! \| G_{2q}^{q,q}(g^{(n)},g^{(n)}) \|_{L^2}^2\label{moment4_expression}\\
 &&\qquad\qquad\qquad\qquad\qquad\qquad+ \sum_{k=1}^{2q-1} k! \| G_k^{q,q}(g^{(n)},g^{(n)}) \|_{L^2}^2 , \notag
\end{eqnarray}
where
$$ \| G_0^{q,q}(g^{(n)},g^{(n)}) \|_{L^2}^2 = q!^2 \|g^{(n)} \|_{L^2}^4 ,$$
and
\begin{eqnarray} \label{expression_G_2q_q}
 (2q)! \| G_{2q}^{q,q}(g^{(n)},g^{(n)}) \|_{L^2}^2 &=& (2q)! \| \widetilde{g^{(n)}\star_0^0 g^{(n)}} \|_{L^2}^2\\
 & =& 2q!^2 \|g^{(n)} \|^4 + \sum_{p=1}^{q-1} \frac{(q!)^4}{(p!(q-p)!)^2} \| g^{(n)} \star_p^p g^{(n)} \|_{L^2}^2 ,\notag
\end{eqnarray}
where we have used \cite[formula (11.6.30)]{pt_book}. Since $q!^2 \|g^{(n)} \|_{L^2}^4\to\sigma^4$ by assumption, we deduce that, if {\bf (2)} is verified, then $\| g^{(n)} \star_p^p g^{(n)} \|_{L^2}\to 0$ for every $p=1,\ldots,q-1$, and the desired implication follows from Proposition \ref{prop_ineq_star}.
%
%If condition III) holds, we know that  $\| h^{(n)} \star_r^l h^{(n)} \| \rightarrow 0$, as $n\rightarrow \infty$, $ \quad \forall r=1,\cdots,q$,  $\forall l=1,\cdots,r\wedge (q-1)$.   We take $f=h^{(n)}$ in the above relation (\ref{moment4_expression}).
%In view of the definition of $G_k^{a,b}$ in formula (\ref{force}), and the fact that $\|h^{(n)}\|^2 \rightarrow \sigma^2 $, we deduce immediately that
%$\E[\big(F^{(n)} \big)^4] \rightarrow 3\sigma^4$. So condition III) implies condition II).\\
%
%On the other hand, by relation (\ref{moment4_expression})  and (\ref{expression_G_2q_q}),  $\E[\big(F^{(n)} \big)^4] \rightarrow 3\sigma^4$ implies $\| h^{(n)} \star_p^p h^{(n)} \| \rightarrow 0, \quad \forall p=1,\cdots,q-1$. By taking $f=h^{(n)}$ in Proposition \ref{prop_ineq_star}, we have immediately that
% $\int \big(h^{(n)} \big)^4 \rightarrow 0$ and
% $\forall r=1,\cdots,q$, $\forall l=1,\cdots,r\wedge (q-1)$,
%      $ \| h^{(n)} \star_r^l h^{(n)} \|^2_{L^2} \rightarrow 0 $. So condition III) is implied by condition II).\\
\end{proof}

\bigskip

%\noindent Next, we show Theorem \ref{thm_universality_Poisson_1}. \\

\begin{proof}[Proof of Theorem \ref{thm_universality_Poisson_1}]

\noindent By virtue of Theorem \ref{t:unigauss}, it suffices to show that, if condition {\bf (1)} in Theorem \ref{thm_universality_Poisson_1} is in order, then the sequence $\{Q_q(N^{(n)}, f^{(n)}, {\bf G}) : n\geq 1\}$ converges in distribution to $Y$. Using the same notation as in Example \ref{example_Gauss}, with $\mathfrak{H} = L^2(\mu)$ and $h_i = g_i$, one has that the homogeneous sum $Q_q(N^{(n)},f^{(n)}, {\bf G})$ can be represented as a multiple Wiener-It\^o integral as follows:
$$Q_q(N^{(n)},f^{(n)}, {\bf G}) = \sum_{i_1, \cdots, i_q}^{N^{(n)}} f^{(n)}(i_1, \cdots, i_q) G_{i_1} \cdots G_{i_q} = I_q^{G}(h^{(n)}),$$
where
$$h^{(n)} = g^{(n)} = \sum_{i_1,\cdots,i_q}^{N^{(n)}} f^{(n)}(i_1,\cdots,i_q) g_{i_1} \otimes \cdots \otimes g_{i_q}.$$
Now, if condition {\bf (1)} in Theorem \ref{thm_universality_Poisson_1} holds, then for every $r=1,\ldots,q-1$, $\|g^{(n)}\star^r_r g^{(n)}\|_{L^2} \to 0$, and we immediately deduce the conclusion by combining Theorem \ref{thm_HS_Poisson} and  Theorem \ref{thm_equiv_Gauss_copy}. \end{proof}

\end{subsection}
\end{section}

\bibliographystyle{plain}

\end{document}